\newif\ifpdflatex
\apptocmd{\sloppy}{\hbadness 10000\relax}{}{}
\def\blfootnote{\xdef\@thefnmark{}\@footnotetext}
\setlist[1]{leftmargin=\mathindent}
\titlespacing*{\section}{0pt}{3.5ex plus 1ex minus .2ex}{2.3ex plus .2ex}
\titlespacing*{\subsection}{0pt}{3.5ex plus 1ex minus .2ex}{2.3ex plus .2ex}
\let\originallabel\label
\def\label{\phantomsection\originallabel}
\def\letters{a,b,c,d,e,f,g,h,i,j,k,l,m,n,o,p,q,r,s,t,u,v,w,x,y,z}
\def\Letters{A,B,C,D,E,F,G,H,I,J,K,L,M,N,O,P,Q,R,S,T,U,V,W,X,Y,Z}
\Letters \do{%
  \expandafter\edef\csname\@l bb\endcsname{\noexpand\ensuremath{\noexpand\mathbb{\@l}}}%
  \expandafter\edef\csname\@l bf\endcsname{{\noexpand\bf \@l}}%
  \expandafter\edef\csname\@l cal\endcsname{\noexpand\ensuremath{\noexpand\mathcal{\@l}}}%
  \expandafter\edef\csname\@l eu\endcsname{\noexpand\ensuremath{\noexpand\EuScript{\@l}}}%
  \expandafter\edef\csname\@l frak\endcsname{\noexpand\ensuremath{\noexpand\mathfrak{\@l}}}%
  \expandafter\edef\csname\@l rm\endcsname{{\noexpand\rm \@l}}%
  \expandafter\edef\csname\@l scr\endcsname{\noexpand\ensuremath{\noexpand\mathscr{\@l}}}%
}
\letters \do{%
  \expandafter\edef\csname\@l bf\endcsname{{\noexpand\bf \@l}}%
  \expandafter\edef\csname\@l frak\endcsname{\noexpand\ensuremath{\noexpand\mathfrak{\@l}}}%
  \expandafter\edef\csname\@l rm\endcsname{{\noexpand\rm \@l}}%
}
\newcommand*{\N}{\ensuremath{\mathbb{N}}}
\let\tempepsilon\epsilon
\let\epsilon\varepsilon
\let\varepsilon\tempepsilon
\let\tempphi\phi
\let\phi\varphi
\let\varphi\tempphi
\renewcommand*\geq{\geqslant}
\renewcommand*\leq{\leqslant}
\DeclareFontFamily{U}{MnSymbolC}{}
\DeclareFontShape{U}{MnSymbolC}{m}{n}{
    <-6>  MnSymbolC5
   <6-7>  MnSymbolC6
   <7-8>  MnSymbolC7
   <8-9>  MnSymbolC8
   <9-10> MnSymbolC9
  <10-12> MnSymbolC10
  <12->   MnSymbolC12}{}
\DeclareFontShape{U}{MnSymbolC}{b}{n}{
    <-6>  MnSymbolC-Bold5
   <6-7>  MnSymbolC-Bold6
   <7-8>  MnSymbolC-Bold7
   <8-9>  MnSymbolC-Bold8
   <9-10> MnSymbolC-Bold9
  <10-12> MnSymbolC-Bold10
  <12->   MnSymbolC-Bold12}{}
\DeclareSymbolFont{MnSyC}{U}{MnSymbolC}{m}{n}
\DeclareMathSymbol{\lefthalfcup}{\mathord}{MnSyC}{183}
\DeclareMathSymbol{\righthalfcup}{\mathord}{MnSyC}{184}
\DeclareMathSymbol{\lefthalfcap}{\mathord}{MnSyC}{185}
\DeclareMathSymbol{\righthalfcap}{\mathord}{MnSyC}{186}
\DeclareMathSymbol{\medsquare}{\mathord}{MnSyC}{106}
\DeclareFontFamily{U}{mathb}{\hyphenchar\font45}
\DeclareFontShape{U}{mathb}{m}{n}{
      <5> <6> <7> <8> <9> <10> gen * mathb
      <10.95> mathb10 <12> <14.4> <17.28> <20.74> <24.88> mathb12
      }{}
\DeclareSymbolFont{mathb}{U}{mathb}{m}{n}
\DeclareMathSymbol{\Asterisk}{2}{mathb}{"06}
\newcommand*{\bigast}{\mathop{\Asterisk}}
\newcommand*{\defas}{\mathrel{\mathrel{\mathop:}=}}
\newcommand*{\rar}{\ensuremath{\Rightarrow}}
\newcommand*{\set}[2]{\left\{#1\;\middle\vert\;#2\right\}} % {x|x<0}
\renewcommand*{\emptyset}{\varnothing}
\newcommand*{\Const}{{\rm Const}}
\newcommand*{\holim}{\mathop{\rm holim}}
\newcommand*{\hocolim}{\mathop{\rm hocolim}}
\newcommand*{\Fib}{\mathop{{\rm h}\mathfrak{F}{\rm ib}}\nolimits}
\newcommand*{\hFib}{\mathop{\rm hFib}\nolimits}
\newcommand*{\hfib}{\mathop{\rm hFib}\nolimits}
\newcommand*{\Cof}{\mathop{\rm Cof}\nolimits}
\newcommand*{\Kan}{\mathop{\rm Kan}\nolimits}
\newcommand*{\adjRelayII}[3][2.2em]{\ensuremath{\SelectTips{cm}{10}\xymatrix@C=#1@1{*++{#2} \ar@<1ex>[r]^-{\ArgI}^-{}="1" & *++{#3} \ar@<1ex>[l]^-{\ArgII}^-{}="2" \ar@{}"1";"2"|(.3){\hbox{}}="3"|(.7){\hbox{}}="4" \ar@{-|}"3";"4"}}}
\newcommand*{\radjRelayII}[3][2.2em]{\ensuremath{\SelectTips{cm}{10}\xymatrix@C=#1@1{*++{#2} \ar@<-1ex>[r]_-{\ArgII}^-{}="1" & *++{#3} \ar@<-1ex>[l]_-{\ArgI}^-{}="2" \ar@{}"1";"2"|(.3){\hbox{}}="3"|(.7){\hbox{}}="4" \ar@{-|}"4";"3"}}}
\newcommand*{\twocell}[6][]{\xymatrix#1{#2\ar@/^1pc/[r]^-{#4}="1" \ar@/_1pc/[r]_-{#5}="2" & #2 \ar@{}"1";"2"|(.3){}="3" \ar@{}"1";"2"|(.7){}="4" \ar@{=>}_-{#6} "3" ;"4"}}
\newcommand*{\quintetRelay}[9]{\ensuremath{\SelectTips{cm}{10}\xymatrix{#1 \ar[r]^{#5} \ar[d]_{#6} & #2 \ar[d]^{#7} \ar@{}[dl]|(.4){\hbox{}}="1"|(.6){\hbox{}}="2" \\ #3 \ar[r]_{#8} & #4 & *!<2em,.5ex>{\ArgI} \ar@{=>}"1";"2"_{#9}}}}
\renewcommand\coprod\amalg
\newcommand*{\sSets}{{\bf sSets}}%{{\makebox[1em]{$\widehat{\makebox[3mm]{$\Deltabf$}}$}}}
\newcommand*{\Cat}{{\bf Cat}}
\newcommand*{\App}{\mathop{\rm App}\nolimits}
\newcommand*{\del}{\partial}
\newcommand*{\eop}{\leavevmode\unskip\penalty9999 \hbox{}\nobreak\hfill\hbox{\ensuremath{_\Box}}}
\newcommand*{\dashbar}{\mathrel{\dabar@ \dabar@ \dabar@}}
\newcommand*{\enumnobreak}{\par\vskip.65\baselineskip\nobreak\@afterheading}
\newcommand*{\hyph}{\ifmmode\hbox{-}\else\nobreakdash-\hspace{0pt}\fi}
\renewcommand*{\cases}[1]{\ensuremath{\left\{\begin{array}{@{}ll} #1 \end{array} \right.}}
\let\dummycolon\colon
\def\colon{\leavevmode\ifmmode\dummycolon\else: \fi}
\let\dummysslash\sslash
\renewcommand*{\sslash}{\!\dummysslash\!}
\newcommand*{\from}{\leftarrow}
\newcommand*{\xto}[2][]{\xrightarrow[#1]{#2}}
\def\empty{} % Needed to check whether a macro parameter is empty or not.
\newbox\tempboxa
\newbox\tempboxb
\newdimen\monokern
\newcommand{\mono}[2][]{
  \def\tempa{#1}%
  \def\tempb{#2}%
  \ifx\tempa\empty%
    \ifx\tempb\empty%
      \rightarrowtail%
    \else%
      \xrightarrowtail[#1]{#2}%
    \fi%
  \else%
    \xrightarrowtail[#1]{#2}%
  \fi%
}
\newcommand\xrightarrowtail[2][]{%
  \setbox\tempboxa\hbox{$#1$}%
  \setbox\tempboxb\hbox{$#2$}%
  \ifdim\wd\tempboxa < \wd\tempboxb%
    \advance\monokern by -\wd\tempboxb%
  \else%
    \advance\monokern by -\wd\tempboxa%
  \fi%
  \ifdim\monokern < 0pt \monokern=0pt \fi%
  \ext@arrow 0359{\rightarrowtailfill@}{#1}{#2}% Draw an extensible arrow with spacing 0 (space left) 3 (space right) 5 (space left and right) 9 (space relative to the tip)
}
\def\rightarrowtailfill@{\arrowfill@{\Yright\kern\monokern}\relbar\rightarrow} % Arrow \rightarrowtailfill@ consists a \Yright tail, --- middle and normal arrowtip.
\newcommand\xtwoheadrightarrow[2][]{\ext@arrow 0359{\twoheadrightarrowfill@}{#1}{#2}}
\def\twoheadrightarrowfill@{\arrowfill@\relbar\relbar\twoheadrightarrow}
\newcommand\xleftarrowtail[2][]{
		\def\test{#1} \def\testt{#2}
		\ifx\testt\empty 
			\ifx\test\empty 
				\ext@arrow 3059{\leftarrowtailfill@}{#1}{\ \ }
			\else 
				\ext@arrow 3059{\leftarrowtailfill@}{#1\ }{}
			\fi
		\else 
			\ext@arrow 3059{\leftarrowtailfill@}{#1\ }{#2\ } 
		\fi
}
	\def\leftarrowtailfill@{\arrowfill@\leftarrow\relbar\Yleft}
\newcommand\xtwoheadleftarrow[2][]{\ext@arrow 3059{\twoheadleftarrowfill@}{#1}{#2}}
	\def\twoheadleftarrowfill@{\arrowfill@\twoheadleftarrow\relbar\relbar}
\newcounter{signcounter}[section]% Es wird ein eigener counter eingeführt (der bei jeder neuen section resetted wird)
\renewcommand{\thesigncounter}{\arabic{signcounter}}% Der counter wird als arabische Zahl dargestellt
\newbox\signbox
\newdimen\envindent
\newcommand{\sign}{\setbox\signbox=\hbox{{\mdseries (\thesection.\thesigncounter)}}\copy\signbox}
\newcommand{\signskip}{\unskip\envindent=\mathindent\advance\envindent by -\wd\signbox\hskip\envindent}
\theoremstyle{change}
\newtheorem{thm}{\signskip Theorem}
\newtheorem{lem}{\signskip Lemma}
\newtheorem{cor}{\signskip Corollary}
\newtheorem{prop}{\signskip Proposition}
\newtheorem{defn}{\signskip Definition}
\newtheorem{rem}{\signskip Remark}
\newtheorem{ex}{\signskip Example}
\theoremstyle{nonumberplain}
\newtheorem{proof}{Proof}
\let\dummyaxm\axm\def\axm{\refstepcounter{signcounter}\dummyaxm}
\let\dummyconj\conj\def\conj{\refstepcounter{signcounter}\dummyconj}
\let\dummythm\thm\def\thm{\refstepcounter{signcounter}\dummythm}
\let\dummymetathm\metathm\def\metathm{\refstepcounter{signcounter}\dummymetathm}
\let\dummymthm\mthm\def\mthm{\refstepcounter{signcounter}\dummymthm}
\let\dummylem\lem\def\lem{\refstepcounter{signcounter}\dummylem}
\let\dummycor\cor\def\cor{\refstepcounter{signcounter}\dummycor}
\let\dummyprop\prop\def\prop{\refstepcounter{signcounter}\dummyprop}
\let\dummymetaprop\metaprop\def\metaprop{\refstepcounter{signcounter}\dummymetaprop}
\let\dummyobs\obs\def\obs{\refstepcounter{signcounter}\dummyobs}
\let\dummysch\sch\def\sch{\refstepcounter{signcounter}\dummysch}
\let\dummyclm\clm\def\clm{\refstepcounter{signcounter}\dummyclm}
\let\dummyconv\conv\def\conv{\refstepcounter{signcounter}\dummyconv}
\let\dummydefn\defn\def\defn{\refstepcounter{signcounter}\dummydefn}
\let\dummynota\nota\def\nota{\refstepcounter{signcounter}\dummynota}
\let\dummynotn\notn\def\notn{\refstepcounter{signcounter}\dummynotn}
\let\dummynom\nom\def\nom{\refstepcounter{signcounter}\dummynom}
\let\dummyrem\rem\def\rem{\refstepcounter{signcounter}\dummyrem}
\let\dummywarn\warn\def\warn{\refstepcounter{signcounter}\dummywarn}
\let\dummyex\ex\def\ex{\refstepcounter{signcounter}\dummyex}
\let\dummyexo\exo\def\exo{\refstepcounter{signcounter}\dummyexo}
\let\dummyequation\equation\def\equation{\refstepcounter{signcounter}\dummyequation}
\begin{document}

\thispagestyle{empty}
\begin{center}
  {\Large\textsc{Cubical Acyclic Homotopy Excision}}

  {\textsc{Kay Werndli, EPF Lausanne, \today}}
\end{center}

\begin{abstract}
  Given a strong homotopy pushout cube of spaces~$A$, we measure how far it is from also being a homotopy pullback cube. Explicitly, letting~$P$ be the homotopy colimit of the diagram obtained from~$A$ by forgetting the initial vertex~$A_\emptyset$, we study the homotopy fibre of the double suspension of $A_\emptyset \to P$. This difference is expressible in terms of the homotopy fibres of the original maps in~$A$.
\end{abstract}

\setcounter{section}{-1}
\section{Introduction}
There is a classical duality between homotopy and homology in that homotopy groups are compatible with homotopy pullbacks, while homology groups are compatible with homotopy pushouts. In both cases, we obtain long exact sequences and the classical homotopy excision theorem (which has been proven and generalised time and again by different people \citep{Blakers1952,Brown1987,Ellis1987,GoodwillieII}) answers the question within which range a homotopy pushout induces a long exact sequence of homotopy groups rather than homology groups.

In \citep{Wojciech1997a} and \citep{HEaC}, this classical mere connectivity result for squares is extended to a cellular and acyclic inequality, respectively, which recovers the connectivity statement but potentially allows one to draw broader consequences than just being able to identify an initial range of vanishing homotopy groups. For example, the stronger acyclic inequality is essential in \citep{Chacholski2015}, where it is used to prove a generalisation of ``Bousfield's key lemma'' \citep{Bousfield1994} in order to describe cellular properties of Postnikov sections and spaces in Farjoun's modified Bousfield-Kan tower.

The classical homotopy excision also has analogues for higher-dimensional cubical diagrams (\citep{Brown1987,Ellis1987,GoodwillieII} or the more recent treatment in \citep{Munson2015}) and in this short article, we take a first step towards strengthening these, again obtaining stronger acyclic inequalities, rather than mere connectivity statements. Explicitly, our main result is the following.
\begin{thm}
  Let $A\colon \medsquare^3 \to \sSets$ be a strong homotopy pushout cube of connected spaces, with homotopy fibres $F_k \defas \hfib(A_\emptyset \to A_k)$ and comparison map $q\colon A_\emptyset \to \holim_{\righthalfcup^3} A$. As long as the homotopy fibres $F_1$,~$F_2$ and~$F_3$ are again connected,
  \[ \hFib(\Sigma^2q) > \Sigma(\Omega F_1\ast\Omega F_2\ast\Omega F_3). \]
\end{thm}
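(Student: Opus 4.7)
The plan is to reduce the three-dimensional statement to the two-dimensional acyclic homotopy excision of \citep{HEaC} by viewing $A$ as a morphism of $2$-cubes and propagating the acyclic inequalities through an iterated homotopy pullback decomposition of~$P$. First, I would decompose $A$ along direction~$3$ as a natural transformation $\alpha\colon B \to C$ of $2$-cubes, where $B$ is the face at height~$0$ (containing the initial vertex $A_\emptyset$ with terminal vertex $A_{\{1,2\}}$) and $C$ is the opposite face (with initial vertex $A_{\{3\}}$ and terminal vertex $A_{\{1,2,3\}}$). The strong pushout hypothesis ensures that both $B$ and $C$ are homotopy pushout squares of connected spaces.

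Next, I would split the indexing category $\righthalfcup^3$ according to whether a subset contains~$3$, expressing $P = \holim_{\righthalfcup^3} A$ as an iterated homotopy pullback built from $\holim_{\righthalfcup^2} B$, $\holim_{\righthalfcup^2} C$, and $A_{\{3\}}$, the gluing being supplied by $\alpha$ and by the $2$-dimensional comparison for~$C$. Applying the two-dimensional acyclic excision of \citep{HEaC} to~$B$ then yields an acyclic inequality
\[\hFib(\Sigma q_B) > \Sigma(\Omega F_1 \ast \Omega F_2)\]
for the $2$-dimensional comparison $q_B\colon A_\emptyset \to \holim_{\righthalfcup^2} B$. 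Independently, the direction-$3$ edge $A_\emptyset \to A_{\{3\}}$ has homotopy fibre $F_3$, which by hypothesis is again connected.

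Finally, the pullback description of $P$ gives a fibre sequence relating $\hFib(q)$ to $\hFib(q_B)$ and $F_3$, twisted by $\alpha$. Applying $\Sigma^2$ and invoking the join--smash identity $\Sigma(X \ast Y) \simeq \Sigma X \wedge \Sigma Y$, together with the behaviour of acyclic inequalities under $\hFib$ and suspension, should upgrade the two-dimensional estimate to the full inequality $\hFib(\Sigma^2 q) > \Sigma(\Omega F_1 \ast \Omega F_2 \ast \Omega F_3)$. The main obstacle will be this final combination step: correctly matching the single-suspension two-dimensional inequality with the direction-$3$ fibre data, and verifying the acyclic analogue of the Blakers--Massey-type combination identity that produces precisely the additional join factor $\Omega F_3$ and the extra suspension. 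This is the technical strengthening beyond the classical connectivity excision and will require careful bookkeeping within the acyclic calculus developed in \citep{HEaC}, in particular the compatibility of the acyclic preorder with both the suspension shift of $\hFib$ and the fiberwise join construction.
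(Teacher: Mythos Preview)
Your proposal has a genuine gap at exactly the point you flag as ``the main obstacle'': the final combination step. The acyclic calculus does not provide a rule that takes a fibre sequence relating $\hFib(q)$, $\hFib(q_B)$, and something built from~$F_3$, and outputs a \emph{join} with an extra factor~$\Omega F_3$. The composable-pair inequality \ref{eqn:fibre sets of composable maps} only gives a \emph{union} $\Fib(g\circ f) > \Fib(f)\cup\Fib(g)$, and there is no general mechanism by which suspending and invoking the join--smash identity would convert this into the triple join you need. Moreover, your $2$-dimensional input is an inequality for $\hFib(\Sigma q_B)$, not for $\hFib(q_B)$ itself, and you cannot simply desuspend acyclic inequalities; so even feeding it into a fibre sequence for~$q$ is not straightforward.

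The paper's argument is structured quite differently and builds substantial machinery precisely to produce the join. It first uses ``Thomason magic'' (Proposition~\ref{prop:thomason magic}) and a retraction trick to reduce $\hFib(\Sigma^2 q)$ to understanding $\Omega\hFib(p)$, where $p\colon \hocolim_{\lefthalfcap^3} P \to \hocolim_{\lefthalfcap^3} S$ compares the homotopy pullback cube~$P$ to the \emph{strong} homotopy pullback~$S$. The triple join then appears for an honest reason: Puppe's theorem identifies the fibre of the strong-pullback comparison map as a join, and a ``generalised Serre theorem'' (Theorem~\ref{thm:serre}) controls the fibres of each step $P^{(k-1)} \to P^{(k)}$ in a filtration $P = P^{(0)}\to P^{(1)}\to P^{(2)}\to P^{(3)} = S$ by a suspended join of two of the collective fibres. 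The square case~\ref{thm:square} is used, but only locally, to estimate the individual pieces $\Ffrak^P_k$ and $\hfib(A_k\to S_k)$; it is not the backbone of the induction. Without an analogue of the Serre-type step or some other device that genuinely manufactures a join, your outline does not close.
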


Restricting our attention to just connectivities and using the Hurewicz theorem, we recover the classical cubical homotopy excision theorem \citep{GoodwillieII} for simply connected spaces.
\begin{cor}
  If $A\colon \medsquare^3 \to \sSets$ is a strong homotopy pushout cube of simply connected spaces whose homotopy fibres $\hfib(A_\emptyset \to A_k)$ are $i_k$\hyph connected for $i_k \geq 1$, then the total fibre $\hfib(q\colon A_\emptyset \to \holim_{\righthalfcup^3}A)$ is $(i_1+i_2+i_3)$\hyph connected.
\end{cor}

\section{Setup and Notation}\label{sec:setup}
We will mainly follow the notations and conventions established in \citep{HEaC}. Our base category in this article is the category~$\sSets$ of (unpointed) simplicial sets, equipped with the standard Quillen (or Kan) model structure. Hence ``space'' will mean ``simplicial set'' throughout. As for cubical diagrams, given $n\in\N$, we write $\langle n\rangle \defas \{1,\ldots,n\}$,
\[ \medsquare^n \defas \Pfrak\langle n\rangle,\qquad \lefthalfcap^n \defas \medsquare^n\setminus\{\langle n\rangle\},\qquad \righthalfcup^n \defas \medsquare^n\setminus\{\emptyset\} \]
for the standard $n$\hyph cube, and the standard indexing posets for $n$\hyph dimensional pushouts and pullbacks respectively. Now a cubical diagram (of spaces) is just some diagram $X\colon P \to \sSets$ indexed by a poset $P \cong \medsquare^n$. Given such a $P \cong \medsquare^n$ as well as $p \leq q$ in~$P$, we define $\del_p^q \defas \set{r \in P}{p \leq r \leq q}$ and if~$X\colon P \to \sSets$ is a cubical diagram, we denote its faces by $\del_p^q X \defas X\vert_{\del_p^q}$. As particularly important special cases, given a single element $p \in P$, and denoting the bottom and top elements of~$P$ by~$\bot$ (e.g.~$\bot = \emptyset$ for $P = \medsquare^n$) and~$\top$ (e.g.~$\top = \langle n\rangle$ for $P = \medsquare^n$), respectively, we write $\del_p \defas \del_p^\top$ and $\del^p \defas \del_\bot^p$. Similarly for $\del_pX$ and $\del^p X$. For the standard $n$\hyph cube $\medsquare^n$ we allow ourselves the notational abuse of omitting braces in the face indices. So, for example, we will write~$\del_{1,2}$ instead of~$\del_{\{1,2\}}$.

Since every $P \cong \medsquare^n$ is a complete lattice, we use standard lattice notation, such as~$\bot$ for the bottom element, $\top$ for the top element and~$\neg$ for the complement. Again, for the standard $n$\hyph cube~$\medsquare^n$, we abusively write $\neg k$ or~$\hat k$ for $\langle n\rangle\setminus\{k\}$ instead of $\neg\{k\}$.

As is well-known, higher-dimensional homotopy pushouts (and dually pullbacks) can be calculated inductively as a series of ordinary two-dimensional homotopy pushouts. More explicitly, given a diagram $X\colon \lefthalfcap^{n+1} \to \sSets$ then
\[ \hocolim_{\lefthalfcap^{n+1}} X \simeq \hocolim\Bigl(\hocolim_{\lefthalfcap^{n}}\del_{n+1}X \from \hocolim_{\lefthalfcap^{n}}\del^{\neg n+1}X \to X_{\neg n+1}\Bigr), \]
which follows immediately from Thomason's theorem \citep[Theorem 26.8]{HToD} and the representation of~$\lefthalfcap^{n+1}$ as a Grothendieck construction
\[ \lefthalfcap^{n+1} \cong \int^{\lefthalfcap^2}\Bigl(\lefthalfcap^n \from \lefthalfcap^n \to \{\ast\}\Bigr). \]
Of course, since everything is symmetrical, it is not essential which pair of opposing faces is used for this calculation and we only chose $\del_{n+1}$ and $\del^{\neg n+1}$ as an example. In fact, it will play a crucial role later that we can pick any pair of opposing faces.

We can now reapply this to the two~$\lefthalfcap^n$ in the above Grothendieck construction etc. In the end, we can represent $\lefthalfcap^{n+1}$ by a Grothendieck construction involving only~$\lefthalfcap^2$ and points. One can use this to show that if all $2$\hyph faces of a cubical diagram are homotopy pushouts, then all higher-dimensional faces (as well as the entire cube) are homotopy pushouts. Such diagrams are called {\it strong homotopy pushouts} \citep{GoodwillieII}. Dually for {\it strong homotopy pullbacks}.

\section{Sets of Spaces}
Our basic objects of study are sets of spaces. Given two such sets of spaces $M$,~$N$, we write $N > M$ (``$N$ is killed by $M$''), which we call an acyclic inequality and means that every space in~$N$ becomes contractible upon left Bousfield localisation of~$\sSets$ at $\set{A \to \Delta[0]}{A\in M}$. If~$M$ or~$N$ are (equivalent to) singletons, we will usually omit the surrounding braces.
\begin{ex}
  As one can easily check, letting~$S^{-1}$ be the empty space, if $S^{-1} \in M$ then $N > M$ for all~$N$. Just as easily, one checks that $N > S^0$ iff all $X \in N$ are non-empty. More generally, one can show \citep{Farjoun1996} that $N > S^{n+1}$ iff every $X \in N$ is $n$\hyph connected.
\end{ex}

Equivalently \citep{Wojciech1996a,Wojciech1996b}, $N > M$ can be taken to mean that~$M$ is contained in the smallest class~$\bar\Ccal(N)$ containing~$N$ and closed under weak equivalences, (unpointed) homotopy colimits indexed by contractible categories and extensions by fibrations (whenever $F \to E \to B$ is a fibre sequence for any base point of~$B$ and $B$,~$F \in \bar\Ccal(N)$ then also~$E\in\bar\Ccal(N)$).

More restrictively, if we let $\Ccal(N)$ be the smallest class containing~$N$ and closed under just weak equivalences and homotopy colimits indexed by contractible categories, we define $N \gg M$ to mean that $M \subseteq \Ccal(N)$ \citep{Wojciech1996a,Wojciech1996b,Farjoun1996}. Obviously, $N \gg M$ implies $N > M$.

We frequently pass from single spaces to sets of spaces when taking homotopy fibres. Notationally, given $f\colon E \to B$, and $b \in B$, let us write $\hFib_b(f)$ for the homotopy fibre of~$f$ above~$b$ (or $\hFib_*(f)$ if~$b$ is clear from the context). On the other hand, we also write
\[ \Fib(f) \defas \set{\hFib_b(f)}{b \in B}. \]
As a final notation for the homotopy fibre, we simply write $\hFib(f)$ to indicate that all elements of~$\Fib(f)$ are weakly equivalent (e.g.~if~$B$ is connected) and that we just pick one representative.

\begin{rem}
  With this notation, the closure of a cellular class $\Ccal(A)$ under extensions by fibrations can be stated without mentioning base points. If~$\Ccal$ is a closed class (i.e.~closed under weak equivalences and pointed homotopy colimits), it is closed under extensions by fibrations iff for every map $f\colon E \to B$, having $B \in C$ and $\Fib(f) \subseteq\Ccal$ implies $E \in\Ccal$.
\end{rem}

Since we are working with unpointed spaces, taking loop spaces is not well-defined and our convention is that~$\Omega B \defas S^{-1}$ for~$B$ not connected. Otherwise, the homotopy type of the loop space is independent of the base point and we pick a representative. This means that~$\Omega B$ is only defined up to homotopy and not functorial. The usual loop space with respect to some $b\in B$ will by denoted by~$\Omega_b B$ or~$\Omega_* B$ if the base point is clear from the context.

Finally, two sets of spaces $M$,~$N$ are weakly equivalent iff every $X \in M$ is weakly equivalent to some $Y \in N$ and vice versa. Also, when applying homotopical constructions to sets of spaces, it is always understood that these are to be applied elementwise. For instance, if $M$,~$N$ are sets of spaces, their join is $M * N \defas \set{A * B}{A\in M,\, B\in N}$.

Lots of results, which are classically only formalised for single pointed or connected spaces generalise directly to arbitrary spaces when formalised using sets of spaces. Let us name a few of the most important ones here, which we are going to use. For more details see \citep{Werndli2016}.

Every composable pair of maps $f\colon A \to B$, $g\colon B \to C$ and every base point $b\in B$ gives rise to a fibre sequence $\hFib_b(f) \to \hFib_{g(b)}(g\circ f) \to \hFib_{g(b)}(g)$. When doing away with base points and taking sets of spaces, it doesn't make sense anymore to speak of fibration sequences but we still have an acyclic inequality
\begin{equation}\label{eqn:fibre sets of composable maps} \Fib(g\circ f) > \Fib(f) \cup \Fib(g). \end{equation}
As a special case of a composable pair, if a map $s\colon A \to B$ has a retraction~$r$, one obtains that $\hFib_b(s) \simeq \Omega_*\hFib_{r(b)}(r)$ for every $b \in B$. When taking fibre sets instead, we can no longer hope for a weak equivalence (just consider $s\colon \Delta[0] \to S^0$) but only cellular inequalities:
\begin{equation}\label{eqn:fibre sets and retracts} \Fib(r) \gg \Fib(s) \gg \Omega\Fib(r). \end{equation}

We will need the following result \citep[Appendix HL]{Farjoun1996} about the commutation of homotopy colimits and homotopy fibres on several occasions.
\begin{thm} {\bf (Puppe)}\label{thm:puppe}
  Given a diagram $X\colon \Ieu\to\sSets$ and $\tau\colon X \rar K$ a transformation to a constant diagram then
  \[ \hfib_k\bigl(\hocolim\tau\colon \hocolim X\to K\bigr) \simeq \hocolim(\hfib_k\tau) \]
  for every base point~$k$ of~$K$.
\end{thm}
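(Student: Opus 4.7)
The plan is to reduce the statement to a strict pullback calculation by replacing $\tau$ pointwise by a Kan fibration, and then to exploit that $\sSets$ is a topos so pullbacks commute with all colimits. The main obstacle, postponed to the end, is to argue that after forming the homotopy colimit the induced map to $K$ is still a quasi-fibration, so its strict fibre continues to compute its homotopy fibre.

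First, apply a functorial factorisation in the Kan--Quillen model structure objectwise in $\Ieu$ to replace each $\tau_i$ by
\[ X_i \xrightarrow{\ \simeq\ } \tilde X_i \twoheadrightarrow K, \]
with the second map a Kan fibration. This yields a transformation $\tilde\tau\colon \tilde X \rar K$ that is a levelwise Kan fibration, together with a level weak equivalence $X \to \tilde X$. Hence $\hocolim X \simeq \hocolim \tilde X$, the two homotopy fibres under consideration agree up to weak equivalence for $\tau$ and $\tilde\tau$, and $\hfib_k(\tau_i) \simeq \hfib_k(\tilde\tau_i) = \tilde\tau_i^{-1}(k)$ is now the \emph{strict} fibre.

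Next, model $\hocolim \tilde X$ by the two-sided bar construction, i.e.\ the realisation of the simplicial space $p_\bullet$ given by
\[ p_n \;=\; \Coprod_{i_0\to\cdots\to i_n}\tilde X_{i_0}, \]
with the induced map to $K$ applying $\tilde\tau_{i_0}$ simplexwise. Since $\sSets$ is a topos, the pullback functor $(-)\times_K\{k\}$ preserves all small colimits, so it commutes with coproducts and with realisations of simplicial objects. Applied to the bar construction, this gives the key identification
\[ (\hocolim \tilde X) \times_K \{k\} \;\cong\; \hocolim_\Ieu\bigl(\tilde\tau^{-1}(k)\bigr) \;\simeq\; \hocolim_\Ieu(\hfib_k\tau). \]

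The remaining, and main, step is to identify this strict fibre with the homotopy fibre $\hfib_k(\hocolim\tilde X \to K)$, i.e.\ to show that $\hocolim\tilde\tau\colon \hocolim\tilde X \to K$ is a quasi-fibration over $k$. Here the constancy of the base is crucial: each level $p_n \to K$ is a coproduct of the Kan fibrations $\tilde\tau_{i_0}$ and is therefore itself a Kan fibration (lifting against a horn inclusion $\Lambda^m_j\hookrightarrow\Delta^m$ reduces to a single summand since $\Lambda^m_j$ is connected for $m\geq 1$), and because the base is the constant simplicial space at $K$ no compatibility conditions on face or degeneracy maps of the base arise. The standard simplicial analogue of May's quasi-fibration theorem then ensures that $|p_\bullet|\colon\hocolim\tilde X \to K$ is a Kan fibration, and combining this with the display above yields the desired equivalence $\hfib_k(\hocolim\tau) \simeq \hocolim(\hfib_k\tau)$.
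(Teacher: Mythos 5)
The paper offers no proof of this statement to compare against: it is quoted from \citep[Appendix HL]{Farjoun1996}. Judged on its own terms, your strategy --- compute the homotopy fibre as a strict pullback and use that base change in the topos $\sSets$ preserves the genuine colimit computing $\hocolim$ --- is the right skeleton, but the step you yourself defer to the end is a genuine gap, and the specific claim you make there is false. The realisation of a levelwise Kan fibration over a constant base need \emph{not} be a Kan fibration. Take $\Ieu = [1]$ the arrow category, $X$ the constant diagram at $\Delta[0]$ and $K = \Delta[0]$: every $\tilde\tau_i$ is already a fibration and every $p_n \to K$ is a fibration ($p_n$ is discrete), yet $\hocolim \tilde X$ is the nerve of $[1]$, i.e.\ $\Delta[1]$, which is not a Kan complex, so $\hocolim\tilde\tau$ is not a Kan fibration. (The theorem's conclusion still holds there, since $\Delta[1]$ is contractible --- the map fails to be a fibration but its strict fibre happens to compute the homotopy fibre.) There is also no ``standard simplicial analogue of May's quasi-fibration theorem'' to fall back on: the realisation lemmas in this vein (V.~Puppe, Bousfield--Friedlander) require control over the \emph{simplicial operators} $p_m \to p_n$ of the bar construction (the $\pi_*$-Kan or equifibered hypotheses), and those operators are built from the arbitrary maps $X(i\to j)$, so the hypotheses fail. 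In effect, the quasi-fibration statement you postpone is equivalent in difficulty to the theorem you are trying to prove.

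The argument can be repaired by changing what you pull back along, which also makes the levelwise fibrant replacement unnecessary. Assume $K$ is a Kan complex (otherwise replace it fibrantly, which changes nothing up to weak equivalence) and let $P_kK \to K$ be the path fibration of paths issuing from $k$, evaluated at their endpoint; it is a Kan fibration with contractible total space. For \emph{any} map $f\colon E \to K$ the strict pullback $E \times_K P_kK$ is a homotopy pullback because one leg is a fibration and $\sSets$ is right proper, and since $P_kK \simeq \Delta[0]$ it is a model for $\hfib_k(f)$; no fibrancy of $f$ is required. Applying this to $f = \hocolim\tau$ and using, exactly as in your argument, that $(-)\times_K P_kK$ preserves the colimit defining $\hocolim$, one gets
\[ \hfib_k(\hocolim\tau) \simeq (\hocolim X)\times_K P_kK \cong \hocolim\bigl(X\times_K P_kK\bigr) \simeq \hocolim\bigl(\hfib_k\tau\bigr), \]
the last step because $X_i\times_K P_kK$ is a natural-in-$i$ model for $\hfib_k(\tau_i)$ and $\hocolim$ preserves levelwise weak equivalences. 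This keeps your two sound ideas (strict pullback, universality of colimits) and discards the false one.
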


\begin{cor}\label{cor:puppe}
  If $X\colon \medsquare^n \to \sSets$ is a strong homotopy pullback, the homotopy fibre of the comparison map $q\colon \hocolim X\vert_{\lefthalfcap^n} \to X_{\langle n\rangle}$ above $x\in X_{\langle n\rangle}$ is
  \[ \hfib_x(q) \simeq \bigast_{i=1}^n \hfib_x(X_{\hat\imath} \to X_{\langle n\rangle}),\qquad\text{whence}\qquad \Fib(q) \simeq \bigast_{i=1}^n \Fib\bigl(X_{\hat\imath} \to X_{\langle n \rangle}\bigr). \]
\end{cor}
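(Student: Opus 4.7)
The plan is to start by invoking Puppe's theorem (Theorem~\ref{thm:puppe}) for the natural transformation $X\vert_{\lefthalfcap^n} \Rightarrow \Const(X_{\langle n\rangle})$ induced by the cube structure, which immediately identifies $\hfib_x(q)$ with $\hocolim_{\lefthalfcap^n} F$, where $F_p \defas \hfib_x(X_p \to X_{\langle n\rangle})$. Extending $F$ to $\medsquare^n$ by setting $F_{\langle n\rangle} \defas \{x\}$ turns $F$ into another strong homotopy pullback cube with contractible top, since taking homotopy fibres over a point sends pullback $2$\hyph faces to pullback $2$\hyph faces. Iterating downward from the contractible top the two-dimensional observation that a homotopy pullback $Y \to \ast \from Z$ is just $Y \times Z$, a short auxiliary induction shows that $F$ is naturally equivalent to the ``product cube'' $p \mapsto \prod_{k \notin p} F_{\hat k}$ with projections as structure maps.

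The main step is then an induction on $n$ showing that $\hocolim_{\lefthalfcap^n} F \simeq F_{\hat 1} \ast \cdots \ast F_{\hat n}$. The case $n = 1$ is trivial, and for the inductive step we apply the Grothendieck decomposition recalled in Section~\ref{sec:setup}:
\[ \hocolim_{\lefthalfcap^n} F \simeq \hocolim\Bigl(F_{\hat n} \from \hocolim_{\lefthalfcap^{n-1}} \del^{\neg n}F \to \hocolim_{\lefthalfcap^{n-1}} \del_n F\Bigr). \]
The sub-cube $\del_n F$ (indices containing $n$) is again a strong pullback $(n-1)$\hyph cube with contractible top $F_{\langle n\rangle}$, so the inductive hypothesis gives $\hocolim_{\lefthalfcap^{n-1}} \del_n F \simeq F_{\hat 1} \ast \cdots \ast F_{\widehat{n-1}}$. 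By the product structure, $\del^{\neg n} F_S \simeq F_{\hat n} \times F_{S \cup \{n\}}$ naturally in $S \subseteq \{1,\dots,n-1\}$, so pulling the constant factor $F_{\hat n}$ out of the hocolim yields $\hocolim_{\lefthalfcap^{n-1}} \del^{\neg n} F \simeq F_{\hat n} \times (F_{\hat 1} \ast \cdots \ast F_{\widehat{n-1}})$ with both structure maps the evident projections. Invoking the standard description $A \ast B \simeq \hocolim(A \from A \times B \to B)$ of a join then yields the claim, and the ``whence'' statement follows by letting $x$ range over $X_{\langle n\rangle}$.

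The main technical obstacle is the preparatory step of establishing the natural ``product cube'' equivalence with all structure maps being projections; although purely formal, this requires careful bookkeeping of naturality data along all codimensions. Once that is in place, the actual colimit computation is a short and pleasant induction, the only real work being the identification of the two structure maps out of $F_{\hat n} \times (F_{\hat 1} \ast \cdots \ast F_{\widehat{n-1}})$ as projections, which is immediate from the explicit product description.
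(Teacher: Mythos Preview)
Your proof is correct and is precisely the natural unpacking of what the paper leaves implicit: the paper gives no proof for this corollary, relying on the reader to combine Puppe's theorem with the standard identification of the homotopy colimit over~$\lefthalfcap^n$ of a product cube as an iterated join. Your inductive argument via the Thomason decomposition of~$\lefthalfcap^n$, together with the product-cube description of the fibre diagram coming from the strong-pullback hypothesis, is exactly how one fills in these details.
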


\noindent On several occasions will we use the following generalisation of Chachólski's theorem \citep{Wojciech1996a,HEaC}.%
\begin{thm} {\bf (Chachólski)}\label{thm:chacholski}
  Every homotopy pushout square
  \[ \xymatrix@=1.5pc{ A \ar[r]^{f} \ar[d]_{g} & B \ar[d] \\ C \ar[r]_{g} & D } \]
  gives rise to a cellular inequality $\Fib(h) \gg \Fib(f)$.
\end{thm}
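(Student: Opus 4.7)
My plan is to prove the cellular inequality by combining Puppe's theorem with an inductive cell-by-cell analysis of $g \colon A \to C$. First I would reduce to a strict model by replacing $f$ with a cofibration via the mapping cylinder, so $D = B \cup_A C$ is a strict pushout without altering any fibre. Fix $b \in B$; the task is to exhibit $\hfib_b(f) \in \Ccal(\Fib(h))$.

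The starting ingredient is the identity obtained by applying Puppe's theorem to the pushout diagram $X \colon \lefthalfcap^2 \to \sSets$ (with $X_\emptyset = A$, $X_1 = B$, $X_2 = C$) and the canonical transformation $X \Rightarrow \const_D$ whose $\hocolim$ is $\id_D$. For each $d \in D$ this yields
\[ \ast \simeq \hfib_d(\id_D) \simeq \hocolim_{\lefthalfcap^2}\Bigl(\hfib_d(h) \leftarrow \hfib_d(A \to D) \to \hfib_d(k)\Bigr), \]
a contractible homotopy pushout of the three fibres. Combined with the composition-induced fibre sequence $\hfib(f) \to \hfib(A \to D) \to \hfib(h)$ coming from $A \xrightarrow{f} B \xrightarrow{h} D$, and the analogous one for $g$ and $k$, this already gives the combinatorial backbone linking $\Fib(f)$, $\Fib(g)$, $\Fib(h)$, $\Fib(k)$ in a single Puppe hocolim over the contractible shape $\lefthalfcap^2$.

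To promote this local identity into the full inequality I would filter $g$ by its relative cell structure: write $C$ as a transfinite composition $A = C_0 \hookrightarrow C_1 \hookrightarrow \cdots$ of single cell attachments, pushed along $f$ to produce $D = \hocolim_i D_i$ with $D_i \defas B \cup_A C_i$. Each cell attachment $D_i \to D_{i+1}$ is itself a pushout to which Puppe applies, and because each attaching map factors through $A$ the new fibre data is governed by $\Fib(f)$ via the starting identity. Closure of $\Ccal(\Fib(h))$ under hocolims over the contractible shapes $\lefthalfcap^2$ and $\omega$ then assembles the pieces into the desired cellular inequality. The main obstacle is precisely this inductive step: realising the new fibre contribution at each cell attachment as an element of $\Ccal(\Fib(h))$, which requires careful bookkeeping of the maps between fibres produced by Puppe and exploits that the attaching maps' image lies in $A$, so the fibre modifications are controlled by $\Fib(f)$ and can be fed back through the starting Puppe identity.
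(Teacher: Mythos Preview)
The paper does not actually prove this theorem: it is quoted as a known result from \cite{Wojciech1996a} and \cite{HEaC}, so there is no ``paper's own proof'' to compare against. What I can do is assess your sketch on its own terms.

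Your Puppe identity is correct, but it does not do the work you want. Knowing that
\[
\ast \simeq \hocolim\bigl(\hfib_d(h) \leftarrow \hfib_d(A\to D) \rightarrow \hfib_d(k)\bigr)
\]
tells you only that this particular pushout is contractible; it does not by itself place $\hfib_b(f)$ inside $\Ccal(\Fib(h))$. The fibre sequence $\hfib_b(f)\to\hfib_d(A\to D)\to\hfib_d(h)$ goes the wrong way for cellular generation: it exhibits $\hfib_d(A\to D)$ as an extension of things in $\Fib(h)$ by $\hfib_b(f)$, not $\hfib_b(f)$ as built from $\Fib(h)$. You would need an additional argument (for instance that the map $\hfib_d(A\to D)\to\hfib_d(h)$ is nullhomotopic, or some splitting) to extract $\hfib_b(f)$ cellularly, and nothing in the sketch supplies this.

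The cell-filtration step has a concrete error: when you write $C$ as $A=C_0\hookrightarrow C_1\hookrightarrow\cdots$, the attaching map of the $(i{+}1)$st cell lands in $C_i$, not in $A$. So the assertion that ``each attaching map factors through $A$'' is false in general, and with it the claim that the new fibre data at each stage is governed by $\Fib(f)$. You correctly flag the inductive step as the main obstacle, but the sketch does not overcome it; the bookkeeping you allude to is precisely the content of Chach\'olski's original argument, which proceeds rather differently (via an analysis of the cellularisation functor and its behaviour on fibrations, not a naked cell induction on $g$). If you want a self-contained proof, you should consult \cite{Wojciech1996a} directly.
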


Finally, we also need the unsuspended square case of the acyclic homotopy excision theorem, as established in \citep{HEaC}.
\begin{thm}\label{thm:square}
  Given a homotopy pushout square as in Chachólski's theorem above with comparison map $q\colon A \to \holim(B \to D \from C)$, then
  \[ \Fib(q) > \Omega\Fib(f)\ast\Omega\Fib(g). \]
\end{thm}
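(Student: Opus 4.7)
The statement is the unsuspended, square-case acyclic analogue of the Blakers--Massey theorem. My plan is to combine a Ganea-type fibre-cofibre construction applied to both $f$ and $g$ with Chach\'olski's theorem~(\ref{thm:chacholski}), Puppe's theorem~(\ref{thm:puppe}) and the inequalities~(\ref{eqn:fibre sets of composable maps}) and~(\ref{eqn:fibre sets and retracts}), in order to extract the join $\Omega F_f\ast\Omega F_g$ as an acyclic lower bound for $\Fib(q)$.

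First, I would replace the square by a strict pushout with $f$ a cofibration, fix compatible basepoints, and form the homotopy pullback $P\defas B\times^h_D C$ with projections $p_B\colon P\to B$ and $p_C\colon P\to C$. Since $q\colon A\to P$ satisfies $p_B\circ q=f$ and $p_C\circ q=g$, the composition inequality~(\ref{eqn:fibre sets of composable maps}) gives
\[ \Fib(f)>\Fib(q)\cup\Fib(p_B)\qquad\text{and}\qquad\Fib(g)>\Fib(q)\cup\Fib(p_C), \]
while the pullback property of $P$ identifies $\Fib(p_B)\simeq\Fib(C\to D)$ and $\Fib(p_C)\simeq\Fib(B\to D)$. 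Chach\'olski's theorem applied to the original pushout then yields the cellular inequalities $\Fib(C\to D)\gg F_f$ and $\Fib(B\to D)\gg F_g$, so $\Fib(q)$ already sits between $F_f$, $F_g$ and their pushed-out counterparts; the task of the next step is to unpack this entanglement into a single join.

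For that, I would apply the Ganea fibre-cofibre construction to both $f$ and $g$: coning off $F_f$ inside $A$ yields a pushout square to which Puppe's theorem applies and identifies the fibre of the resulting map $A/F_f\to B$ as $F_f\ast\Omega B$; symmetrically for $F_g$ and $A/F_g\to C$. Pushing both cones simultaneously into $P$ (producing a common quotient through which $q$ factors) and iterating~(\ref{eqn:fibre sets of composable maps}) along the resulting chain of factorisations bounds $\Fib(q)$ above by a join of pulled-back loop-space fibres. The final step is to convert the ``wrong-sided'' $\Omega B$ and $\Omega C$ appearing there into $\Omega F_f$ and $\Omega F_g$ via~(\ref{eqn:fibre sets and retracts}): the path-loop fibration $\Omega B\to PB\to B$ pulled back along $f$ has total space equivalent to $F_f$, and the inclusion of a chosen basepoint into $F_f$ together with its natural retraction induce the desired loop-shift.

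The hard part will be precisely this loop-shift: turning a bound in terms of $\Omega B$ and $\Omega C$ into one in terms of $\Omega F_f$ and $\Omega F_g$ without losing ground in the acyclic order. I expect the cleanest way to organise the argument is through a single auxiliary three-cube whose top face involves the path space $PD$, so that the two Ganea pushouts and the pullback $P$ all appear as faces of this cube and the join $\Omega F_f\ast\Omega F_g$ emerges from a single application of Puppe to the top pullback face, while Chach\'olski transfers the bound down to $\Fib(q)$ via the pushout faces.
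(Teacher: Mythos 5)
First, a point of context: the paper does not prove this theorem at all --- it is quoted from \citep{HEaC} as an external input, so there is no internal proof to compare yours against. What you are attempting is to reprove the main square-case result of that reference, which is a substantial theorem in its own right, and your sketch does not yet get there.

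There are two concrete gaps. The first is the claim that $q$ factors through the Ganea quotients. Writing $F_f=\hfib(f)$, the restriction of $q$ to $F_f\subseteq A$ is, up to equivalence, the canonical map $\hfib(f)\to\hfib(p_B)\simeq\hfib(C\to D)$ induced by the square, and this map is not null-homotopic in general --- Chach\'olski's theorem \ref{thm:chacholski} says precisely that its target is built cellularly from its source. Hence $q$ does not extend over $A\cup_{F_f}CF_f$, the ``common quotient through which $q$ factors'' does not exist, and the chain of factorisations to which you want to apply \ref{eqn:fibre sets of composable maps} is not available. The second gap is the loop-shift, which you flag as the hard part but for which no mechanism is actually given: \ref{eqn:fibre sets and retracts} needs a genuine section/retraction pair whose retraction has fibre set $F_f$, and the pair you name (the basepoint inclusion $\ast\to F_f$ with its retraction to the point) only yields the tautology $\Omega F_f\gg\Omega F_f$. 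Note also that your opening inequalities $\Fib(f)>\Fib(q)\cup\Fib(p_B)$ are correctly derived but point the wrong way: they say $\Fib(f)$ is killed by $\Fib(q)\cup\Fib(p_B)$, which constrains $\Fib(f)$, whereas the theorem requires an inequality with $\Fib(q)$ on the left, and nothing in the proposal inverts this. The useful consequence of your first step is rather the fibre sequence $\hfib(q)\to\hfib(f)\to\hfib(C\to D)$, which identifies $\hfib(q)$ as the fibre of the comparison map $\hfib(f)\to\hfib(C\to D)$; producing the unsuspended join $\Omega F_f\ast\Omega F_g$ as a killer of that fibre is exactly the hard content of the theorem, and it remains untouched by the sketch.
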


\section{Thomason Magic}
In this section, we generalise a trick due to Chachólski \citep{Wojciech1997a}, who shows that, given a transformation of spans
  \[ \vcenter{\hbox{$\xymatrix@=1.5pc{ B \ar@{=}[d] & A \ar[l] \ar[r] \ar[d]_{f} & C \ar@{=}[d] \\ B & A' \ar[l] \ar[r] & C }$}}\qquad\vcenter{\hbox{with $\hocolim$}}\qquad\vcenter{\hbox{$\xymatrix@=1.5pc{P \ar[d]^{g} \\ Q & *!<2em,.5ex>{,}}$}} \]
one has $\Fib(\Sigma f) \gg \Fib(g) \gg \Sigma\Fib(f)$. The way he goes about the first cellular inequality is as follows:
\begin{enumerate}
  \item The suspension map $\Sigma f$ can be obtained as the homotopy colimit of
    \[ \xymatrix@=1.5pc{ \Delta[0] \ar@{=}[d] & B \ar[l]\ar[r]\ar@{=}[d] & B \ar@{=}[d] & A \ar[l]\ar[r]\ar[d]_{f} & C \ar@{=}[d] & C \ar[l]\ar[r]\ar@{=}[d] & \Delta[0] \ar@{=}[d] \\ \Delta[0] & B \ar[l]\ar[r] & B & A' \ar[l]\ar[r] & C & C \ar[l]\ar[r] & \Delta[0] & *!<2em,.5ex>{,} } \]
    (we can take the homotopy pushouts of the four outer appendages first, by Thomason's theorem \citep[Theorem 26.8]{HToD}). Instead, we can take the central homotopy pushouts first, and $\Fib(\Sigma f) \gg \Fib(g)$ then follows from Dror Farjoun's theorem \citep[Theorem 9.1]{Wojciech1996a} because all remaining fibres are contractible.
  \item Similarly, $g$ can be obtained as the homotopy colimit of
    \[ \xymatrix@=1.5pc{ B \ar@{=}[d] & A' \ar[l]\ar[r]\ar@{=}[d] & A' \ar@{=}[d] & A \ar[l]\ar[r]\ar[d]_{f} & A' \ar@{=}[d] & A' \ar[l]\ar[r]\ar@{=}[d] & C \ar@{=}[d] \\ B & A' \ar[l]\ar[r] & A' & A' \ar[l]\ar[r] & A' & A' \ar[l]\ar[r] & C & *!<2em,.5ex>{.} } \]
   Again, we can instead take the central pushout first. By Puppe's theorem, one has
    \[ \hFib_*\bigl(\hocolim(A' \from A \to A') \to A'\bigr) \simeq \hocolim\bigl(\Delta[0] \from \hFib_*(f) \to \Delta[0]\bigr) \]
    for every base point of~$A'$ and so $\hocolim(A' \from A \to A') \to A'$ is a {\it fibrewise suspension\/} of~$f$. All in all, the induced map between the central pushouts has $\Sigma\Fib(f)$ as its fibre set and $\Fib(g) \gg \Sigma\Fib(f)$ again follows from Dror Farjoun's theorem.
\end{enumerate}

This proof does not rely on our initial indexing poset being a span. All it does is to add appendages to all non-bottom elements, extend the diagram and apply Thomason's theorem in two different ways. For this reason, we refer to the result as {\it Thomason magic}. The adding of appendages is done by the following Grothendieck construction.
\begin{defn}
  Given a poset~$P$ with a bottom element~$\bot$, we define
  \[ \App(P) \defas \int^P F \qquad\text{where}\qquad F\colon P \to \Cat,\, p \mapsto \cases{ \{\emptyset\} & p = \bot \\ \lefthalfcap^2 & p \neq \bot } \]
  with $F(\bot \leq p)\colon \{\emptyset\} \to \lefthalfcap^2$ being the element~$\{1\}$ for~$p \neq \bot$ and all other morphisms being identities. This comes with an inclusion $P \hookrightarrow \App(P)$, $\bot \mapsto (\bot,\emptyset)$ and $p \mapsto (p,1)$ for $p \neq \bot$.  
\end{defn}

With this construction at hand, one can generalise the above proof in a straightforward manner even to arbitrary posets with a bottom element (see \citep{Werndli2016} for more details).
\begin{prop}\label{prop:thomason magic} {\bf (Thomason Magic)}
  Given a transformation $\tau\colon X \rar Y$ of diagrams $X$,~$Y\colon \lefthalfcap^n \to \sSets$ such that every component~$\tau_S$ with $S \neq \emptyset$ is a weak equivalence, then
  \begin{enumerate}
    \item $\Fib\bigl(\hocolim\tau\colon \hocolim X \to \hocolim Y\bigr) \gg \Sigma^{n-1}\Fib(\tau_\emptyset\colon X_\emptyset \to Y_\emptyset)$;
    \item $\Fib(\Sigma^{n-1}\tau_\emptyset\colon \Sigma^{n-1}X_\emptyset \to \Sigma^{n-1}Y_\emptyset) \gg \Fib\bigl(\hocolim\tau\colon \hocolim X \to \hocolim Y\bigr)$.
  \end{enumerate}
\end{prop}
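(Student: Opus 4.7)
The plan is to generalize verbatim Chachólski's two-step argument for $n=2$, using the $\App$-construction just introduced. For each of the two cellular inequalities, one extends~$\tau$ to a transformation $\tilde\tau\colon \tilde X \Rightarrow \tilde Y$ of diagrams on $\App(\lefthalfcap^n)$, computes $\hocolim\tilde\tau$ in two essentially different ways, and concludes by applying Dror Farjoun's theorem to the reduced diagram obtained in the second way.

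For part (b), I would define $\tilde X(\emptyset,\emptyset) := X_\emptyset$, $\tilde X(S,1) := \tilde X(S,\emptyset) := X_S$ and $\tilde X(S,2) := \Delta[0]$ for $S\neq\emptyset$, with the identity on $(S,\emptyset)\to(S,1)$, the unique map $X_S\to\Delta[0]$ on $(S,\emptyset)\to(S,2)$, and the original $X$-structure maps between central vertices; $\tilde Y$ is analogous. The transformation $\tilde\tau$ is~$\tau$ on the central copies and the identity on each $\Delta[0]$-appendage. Computing via the canonical Grothendieck projection $\App(\lefthalfcap^n)\to\lefthalfcap^n$, each column hocolim at $S\neq\emptyset$ is the contractible cone $CX_S\simeq\Delta[0]$, so one reduces to the hocolim over $\lefthalfcap^n$ of a diagram concentrated at~$\emptyset$ with value $X_\emptyset$. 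A direct induction on~$n$, via the pushout decomposition of $\lefthalfcap^{n+1}$ recalled in Section~\ref{sec:setup}, identifies this hocolim with $\Sigma^{n-1}X_\emptyset$, and hence $\hocolim\tilde\tau \simeq \Sigma^{n-1}\tau_\emptyset$. Secondly, the central sub-poset $\{(\emptyset,\emptyset)\}\cup\{(S,1):S\neq\emptyset\}$, an upward-closed copy of $\lefthalfcap^n$ in $\App(\lefthalfcap^n)$, is where $\tilde X$ restricts to the original diagram~$X$; collapsing this central part to its hocolim $\hocolim X$ produces a reduced diagram in which $\tilde\tau$ has pointwise fibre $\Fib(\hocolim\tau)$ at the collapsed central vertex and trivial fibres everywhere else (each outer component of $\tilde\tau$ being either the weak equivalence $\tau_S$ or an identity on $\Delta[0]$). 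Dror Farjoun's theorem then gives $\Fib(\Sigma^{n-1}\tau_\emptyset)\gg\Fib(\hocolim\tau)$.

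Part (a) is symmetric: one sets $\tilde X(S,1):=\tilde X(S,\emptyset):=Y_\emptyset$ (constant on the inner vertices of each column) and $\tilde X(S,2):=Y_S$, with the $Y$-structure map $Y_\emptyset\to Y_S$ on $(S,\emptyset)\to(S,2)$, while $\tilde Y$ is the same except that $\tilde Y(\emptyset,\emptyset):=Y_\emptyset$; the transformation $\tilde\tau$ is $\tau_\emptyset$ at the bottom and the identity elsewhere. The projection computation now collapses each column to $Y_S$ and yields $\hocolim\tilde\tau\simeq\hocolim\tau$, whereas the central collapse identifies the central hocolim with a higher fibrewise suspension of $\tau_\emptyset$: by Puppe's theorem applied to the augmentation to the constant diagram~$Y_\emptyset$, its fibre above $Y_\emptyset$ is $\Sigma^{n-1}\Fib(\tau_\emptyset)$, so that Dror Farjoun produces $\Fib(\hocolim\tau)\gg\Sigma^{n-1}\Fib(\tau_\emptyset)$.

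The main technical obstacle is the rigorous justification of the ``central collapse'' step: since the central sub-poset is not the fibre of any projection out of $\App(\lefthalfcap^n)$, Thomason's theorem does not apply directly. The substitute, making crucial use of the fact that the central copy is upward-closed with contractible nerve and that the outer appendages are attached to it in the specific way dictated by the Grothendieck construction, requires a careful two-variable / Bousfield--Kan-type argument to produce the reduced diagram to which Dror Farjoun is then applied.
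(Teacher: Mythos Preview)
Your approach is correct and is precisely the paper's: extend~$\tau$ over $\App(\lefthalfcap^n)$ in the two ways you describe and apply Thomason's theorem twice, then Dror Farjoun's theorem. The paper gives no more detail than this, referring to \citep{Werndli2016}.

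The one place where you make life harder than necessary is your flagged ``main technical obstacle''. There is no need for an ad~hoc collapse argument: $\App(\lefthalfcap^n)$ carries a \emph{second} Grothendieck structure, this time over~$\lefthalfcap^2$, via the functor
\[
  (\emptyset,\emptyset)\mapsto\{1\},\qquad (S,a)\mapsto a\quad\text{for }S\neq\emptyset,
\]
whose fibre over~$\{1\}$ is exactly your central copy of~$\lefthalfcap^n$ and whose fibres over~$\emptyset$ and~$\{2\}$ are both the discrete poset $\lefthalfcap^n\setminus\{\emptyset\}$. (Cocartesian lifts at $(S,\emptyset)$ are $(S,\emptyset)\to(S,\{1\})$ and $(S,\emptyset)\to(S,\{2\})$.) Thomason's theorem applied to this second projection gives
\[
  \hocolim_{\App(\lefthalfcap^n)}\tilde X \;\simeq\; \hocolim\Bigl(\hocolim_{\lefthalfcap^n}\tilde X\vert_{\text{central}}\ \longleftarrow\ \Coprod_{S\neq\emptyset}\tilde X(S,\emptyset)\ \longrightarrow\ \Coprod_{S\neq\emptyset}\tilde X(S,\{2\})\Bigr),
\]
so the ``central collapse'' is just Thomason again. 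In both of your extensions the induced transformation on the two outer fibres is a weak equivalence (identities or coproducts of the~$\tau_S$ with $S\neq\emptyset$), and the only nontrivial component sits at~$\{1\}$; Dror Farjoun's theorem finishes exactly as you say. This is what the paper means by ``apply Thomason's theorem in two different ways''.
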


\section{Serre's Theorem}
Another result we need to generalise to higher dimensions is the result \citep[Theorem 7.1]{Wojciech1997a}, which is referred to as the {\it generalised Serre theorem\/} in {\it op.~cit}.
\begin{lem}
  Let $\tau\colon X \rar Y$ be a transformation of diagrams $X$,~$Y\colon \lefthalfcap^n \to \sSets$ and $m \in \N_{\leq n}$ such that
  \[ \vcenter{\hbox{$\tau\vert_{\medsquare^{m-1}}\colon X\vert_{\medsquare^{m-1}} \rar Y\vert_{\medsquare^{m-1}}$}} \qquad \vcenter{\hbox{\begin{minipage}{15em} \begin{center} viewed as a diagram $\medsquare^m \to \sSets$ is a homotopy pushout \end{center}\end{minipage}}} \]
  and such that~$\tau$ is a weak equivalence outside of~$\medsquare^{m-1}$. Then $\tau_*\colon \hocolim_{\lefthalfcap^n} X \to \hocolim_{\lefthalfcap^n} Y$ is a weak equivalence.
\end{lem}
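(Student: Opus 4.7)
The plan is to proceed by induction on~$n$, with base case $n = m$, making iterated use of the description of $\hocolim_{\lefthalfcap^{n}}$ as a $2$\hyph dimensional homotopy pushout recalled in Section~\ref{sec:setup}.

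For the inductive step ($n > m$), I pick any direction $k \in \{m,\ldots,n\}$ (say $k = n$) and decompose both $\hocolim_{\lefthalfcap^n}X$ and $\hocolim_{\lefthalfcap^n}Y$ as $2$\hyph dimensional homotopy pushouts of $\hocolim_{\lefthalfcap^{n-1}}\del_k(-)$, $\hocolim_{\lefthalfcap^{n-1}}\del^{\hat k}(-)$, and $(-)_{\hat k}$. Every vertex in~$\del_k$ contains~$k \geq m$, and $\hat k = \{1,\ldots,n\}\setminus\{k\}$ strictly contains $\{1,\ldots,m-1\}$ since $n > m$; hence none of these vertices lie in~$\medsquare^{m-1}$, and the first and third corner maps induced by~$\tau$ are weak equivalences by hypothesis. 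The middle corner is $\hocolim_{\lefthalfcap^{n-1}}\del^{\hat k}\tau$, and the restricted transformation $\del^{\hat k}\tau$ again satisfies the hypotheses of the lemma (the same subposet $\medsquare^{m-1}$ sits inside $\lefthalfcap^{n-1}$ with the same $m$\hyph cube homotopy pushout property, and $\tau$ is still a weak equivalence outside of it). By induction, this middle map is also a weak equivalence, and homotopy invariance of homotopy pushouts yields that $\hocolim\tau$ is a weak equivalence.

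For the base case $n = m$, decomposing along $k = m$ gives
\[ \hocolim_{\lefthalfcap^m}Y \simeq \hocolim\Bigl(\hocolim_{\lefthalfcap^{m-1}}\del_m Y \from \hocolim_{\lefthalfcap^{m-1}}Y|_{\lefthalfcap^{m-1}} \to Y_{\top_{m-1}}\Bigr) \]
and analogously for~$X$; the left corner map $\hocolim\del_m X \to \hocolim\del_m Y$ is again a weak equivalence, whereas the other two corner maps need not be. The trick is to apply the $2$\hyph dimensional pushout formula once more, this time to the $m$\hyph cube $\tau|_{\medsquare^{m-1}}$ along the $\tau$\hyph direction: the homotopy pushout hypothesis becomes
\[ Y_{\top_{m-1}} \simeq \hocolim\Bigl(\hocolim_{\lefthalfcap^{m-1}}Y|_{\lefthalfcap^{m-1}} \from \hocolim_{\lefthalfcap^{m-1}}X|_{\lefthalfcap^{m-1}} \to X_{\top_{m-1}}\Bigr), \]
in which the left leg is the canonical inclusion into this pushout. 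Substituting and invoking associativity of homotopy pushouts rewrites $\hocolim_{\lefthalfcap^m}Y$ as
\[ \hocolim\Bigl(\hocolim_{\lefthalfcap^{m-1}}\del_m Y \from \hocolim_{\lefthalfcap^{m-1}}X|_{\lefthalfcap^{m-1}} \to X_{\top_{m-1}}\Bigr), \]
the new left map being the composite $\hocolim X|_{\lefthalfcap^{m-1}} \to \hocolim Y|_{\lefthalfcap^{m-1}} \to \hocolim\del_m Y$. Naturality of~$\tau$ identifies this rewritten expression with the image of the analogous $X$\hyph pushout under $\hocolim\tau$: the middle and right corner maps become identities and the only nontrivial corner, $\hocolim\del_m X \to \hocolim\del_m Y$, is a weak equivalence; hence so is $\hocolim\tau$.

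The main bookkeeping obstacle is verifying that the rewritten $Y$\hyph pushout genuinely matches the image of the $X$\hyph pushout under $\hocolim\tau$, so that the identities and weak equivalence on the three corners really assemble to $\hocolim\tau$ itself. This is a diagram chase using the naturality of~$\tau$ at the structural maps from $\lefthalfcap^{m-1}$ into~$\del_m$, combined with the standard pushout-associativity lemma in the homotopy category of~$\sSets$.
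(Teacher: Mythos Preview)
Your proof is correct and follows essentially the same route as the paper's: both induct on~$n$ via the Thomason decomposition along a direction $k \geq m$, with the paper taking $n=2$ as the base case (citing the Fubini argument from \citep{Wojciech1997a}) and handling $m=n$ by reducing to it, whereas you take $n=m$ as the base case and unpack that same Fubini/associativity argument explicitly. The content is identical; only the bookkeeping of where the base case sits differs.
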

\begin{proof}
If~$n < 2$, the claim is trivial. The case $n = 2$ is in the proof of \citep[Theorem 7.1]{Wojciech1997a} and follows from a Fubini-type argument for homotopy colimits. The general case is by induction on~$n$. We use Thomason's theorem \citep[Theorem 26.8]{HToD} and get
\[ \xymatrix@C=1em@R=3ex{ \hocolim X \ar@{..>}[d] & \simeq & \hocolim\Bigl(\hocolim_{\lefthalfcap^{n-1}} X\vert_{\del_n} \ar@<2em>[d]_{\simeq} & \hocolim_{\lefthalfcap^{n-1}} X\vert_{\del^{\neg n}} \ar[l] \ar[r] \ar[d] & X_{\neg n} \ar@<-.5em>[d] \Bigr) \\ \hocolim Y & \simeq & \hocolim\Bigl(\hocolim_{\lefthalfcap^{n-1}} Y\vert_{\del_n} & \hocolim_{\lefthalfcap^{n-1}} Y\vert_{\del^{\neg n}} \ar[l] \ar[r] & Y_{\neg n} \Bigr) & *!<1em,1ex>{,} } \]
where the solid arrow on the left is a weak equivalences because $\del_n \subseteq \medsquare^n\setminus\medsquare^{m-1}$. If $m < n$, the same applies to the right arrow, while the middle one is a weak equivalence by the inductive hypothesis. Finally, if $m = n$, then, by the claim's hypotheses and Thomason's theorem, the right-hand square in the diagram is a pushout and the claim follows from the case $n=2$.
\end{proof}

\begin{thm} {\bf (Generalised Serre Theorem)}\label{thm:serre}
  Let $\tau\colon X \rar Z$ be a natural transformation of diagrams $X$,~$Z\colon \lefthalfcap^n \to \sSets$ and $m \in \N_{\leq n}$ such that
  \[ \vcenter{\hbox{$\tau\vert_{\medsquare^{m-1}}\colon X\vert_{\medsquare^{m-1}} \rar Z\vert_{\medsquare^{m-1}}$}} \qquad \vcenter{\hbox{\begin{minipage}{15em} \begin{center} viewed as a diagram $\medsquare^m \to \sSets$ is a strong homotopy pullback \end{center}\end{minipage}}} \]
  and such that~$\tau$ is a weak equivalence outside of~$\medsquare^{m-1}$. Furthermore, let us fix any base point in~$X_\emptyset$ and denote the fibres of the strong homotopy pullback~$\tau\vert_{\medsquare^{m-1}}$ by $F_1,\ldots,F_m$; i.e.
  \[ F_k \defas \hfib_*(X_\emptyset \to X_k) \text{ for $k < m$} \qquad\text{and}\qquad F_m \defas \hfib_*(X_\emptyset \to Z_\emptyset). \]
  Then, $\hfib_*(\hocolim X \to \hocolim Z) \gg \Sigma^{n-m}(F_1\ast\ldots\ast F_m)$.
\end{thm}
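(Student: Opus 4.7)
The proof is by induction on $n \geq m$, following the same scheme as the preceding Lemma. The base case $n = 2$ covers two subcases: $m = 1$ is immediate from Thomason magic (Proposition~\ref{prop:thomason magic}(a)) applied directly to $\tau$, while $m = 2$ is the generalised Serre theorem of Chach\'olski~\citep[Theorem~7.1]{Wojciech1997a}.

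For the inductive step $n \geq 3$, use Thomason's theorem to decompose $\hocolim_{\lefthalfcap^n}$ along coordinate $n$, exhibiting $\hocolim\tau$ as the induced hocolim of a $\lefthalfcap^2$-transformation of spans with left leg $\hocolim_{\lefthalfcap^{n-1}}\tau|_{\del_n}$, centre $\hocolim_{\lefthalfcap^{n-1}}\tau|_{\del^{\hat n}}$ and right leg $\tau_{\hat n}$. When $m < n$, coordinate $n$ lies outside $\medsquare^{m-1}$, so both legs are weak equivalences; the centre map falls under the inductive hypothesis with parameters $(n-1, m)$, yielding a fibre $\gg \Sigma^{(n-1)-m}(F_1 \ast \ldots \ast F_m)$, and Proposition~\ref{prop:thomason magic}(a) applied to this $\lefthalfcap^2$-transformation contributes the remaining suspension.

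The case $m = n$ is the main technical obstacle. Here the right leg $\tau_{\hat n}$ has fibre $F_n$ rather than being a weak equivalence, but the ``right-hand square'' with vertices $\hocolim_{\lefthalfcap^{n-1}}X|_{\del^{\hat n}}$, $X_{\hat n}$, $\hocolim_{\lefthalfcap^{n-1}}Z|_{\del^{\hat n}}$ and $Z_{\hat n}$ of the decomposition is itself a homotopy pullback square. Indeed, in the strong pullback cube $\tau|_{\medsquare^{n-1}}$, every rectangle $X_S, X_{\hat n}, Z_S, Z_{\hat n}$ (for $S \in \lefthalfcap^{n-1}$) is a pullback as a composite of pullback 2-faces, and distributing $\holim(X_{\hat n} \to Z_{\hat n} \from -)$ over the hocolim yields $\hocolim_{\lefthalfcap^{n-1}}X|_{\del^{\hat n}} \simeq \holim(X_{\hat n} \to Z_{\hat n} \from \hocolim_{\lefthalfcap^{n-1}}Z|_{\del^{\hat n}})$. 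Applying the $n = m = 2$ base case of Serre's theorem to this $\lefthalfcap^2$-transformation (whose left leg remains a weak equivalence) yields $\Fib(\hocolim\tau) \gg F'_1 \ast F'_2$, where $F'_1 = \hfib(\hocolim_{\lefthalfcap^{n-1}}X|_{\del^{\hat n}} \to X_{\hat n}) \simeq F_1 \ast \ldots \ast F_{n-1}$ by Corollary~\ref{cor:puppe} applied to the $(n-1)$-dimensional strong pullback sub-cube $X|_{\medsquare^{n-1}}$, and $F'_2 = \hfib(\hocolim_{\lefthalfcap^{n-1}}\tau|_{\del^{\hat n}}) \simeq F_n$ by Puppe, using the constancy of the $n$-direction fibres throughout the pullback cube together with the contractibility of the nerve of $\lefthalfcap^{n-1}$; the join $F'_1 \ast F'_2 \simeq F_1 \ast \ldots \ast F_n$ is then the desired bound. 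Establishing the pullback structure of the right-hand square in the $m = n$ case is the delicate step on which the whole argument hinges.
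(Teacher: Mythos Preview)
Your argument is correct and takes a genuinely different route from the paper's. The paper does not induct on~$n$: instead it factors $\tau$ as $X \Rightarrow Y \Rightarrow Z$, where $Y$ agrees with~$Z$ everywhere except that $Y_{\langle m-1\rangle}$ is chosen so that $X\vert_{\medsquare^{m-1}} \Rightarrow Y\vert_{\medsquare^{m-1}}$, viewed as a $\medsquare^m$\hyph diagram, is a homotopy \emph{pushout}. The preceding Lemma then makes $\hocolim X \to \hocolim Y$ a weak equivalence, and $Y \Rightarrow Z$ differs from the identity only at the single vertex $\langle m-1\rangle$, where the comparison map $Y_{\langle m-1\rangle} \to Z_{\langle m-1\rangle}$ has fibre $F_1\ast\cdots\ast F_m$ by Corollary~\ref{cor:puppe}. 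Repeated Thomason decompositions then move this vertex into the initial slot of a $\lefthalfcap^{n-m+1}$, and a single application of Thomason magic supplies the $\Sigma^{n-m}$. Thus the paper trades your descent step (pullback-stability of the right-hand square in the $m=n$ case, which you rightly flag as the crux) for the pushout-replacement Lemma, and thereby treats all $m$ uniformly with no case split. Your approach avoids the auxiliary diagram~$Y$ and the Lemma altogether, at the price of invoking universality of homotopy colimits under base change---which, as you note, follows from two applications of Puppe's theorem and is well within scope. One minor point of exposition: your identification $F'_2 \simeq F_n$ is most transparently read off as ``equal vertical fibres'' from the pullback square you have just established, rather than via a separate appeal to Puppe and the contractibility of $N\lefthalfcap^{n-1}$.
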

\begin{proof}
  The case $n = 2$ is \citep[Theorem 7.1]{Wojciech1997a}. For a general~$n$, we factor~$\tau$ as $X \rar Y \rar Z$, where~$Y$ agrees with~$Z$ everywhere except at~$Y_{\langle m-1\rangle}$, which is such that
  \[ \vcenter{\hbox{$\tau\vert_{\medsquare^{m-1}}\colon X\vert_{\medsquare^{m-1}} \rar Y\vert_{\medsquare^{m-1}}$}} \qquad \vcenter{\hbox{\begin{minipage}{15em} \begin{center} viewed as a diagram $\medsquare^m \to \sSets$ is a homotopy pushout. \end{center}\end{minipage}}} \]
  By the lemma above, the induced morphism $\hocolim X \to \hocolim Y$ is a weak equivalence and so it suffices to show that
  \[ \hfib_*(\hocolim Y \to \hocolim Z) \gg \Sigma^{n-m}(F_1\ast\ldots\ast F_m). \]
  $Y_{\langle m-1\rangle} \to Z_{\langle m-1\rangle}$ is the comparison map for the strong homotopy pullback $X\vert_{\medsquare^{m-1}} \rar Z\vert_{\medsquare^{m-1}}$, Puppe's theorem allows us to identify its fibre as being $F_1\ast\ldots\ast F_m$. So, what we are really going to show is that
  \[ \hfib_*(\hocolim Y \to \hocolim Z) \gg \Sigma^{n-m}\hfib_*\Bigl(Y_{\langle m-1\rangle} \to Z_{\langle m-1\rangle}\Bigr). \]
  We now repeatedly use Thomason's theorem to move this comparison map into a central location of some subcube and then apply Thomason magic. Identifying $\lefthalfcap^n \cong \int^{\lefthalfcap^2} (\lefthalfcap^{n-1} \from \lefthalfcap^{n-1} \to \{\ast\})$, we use Thomason's theorem to write the (induced map between) homotopy colimits as
\[ \xymatrix@C=1em@R=3ex{ \hocolim Y \ar@{..>}[d] & \simeq & \hocolim\Bigl(\hocolim_{\lefthalfcap^{n-1}} Y\vert_{\del_1} \ar@<2em>[d] & \hocolim_{\lefthalfcap^{n-1}} Y\vert_{\del^{\neg 1}} \ar[l] \ar[r] \ar[d]_{\simeq} & Y_{\neg 1} \ar@<-.5em>[d]_{\simeq} \Bigr) \\ \hocolim Z & \simeq & \hocolim\Bigl(\hocolim_{\lefthalfcap^{n-1}} Z\vert_{\del_1} & \hocolim_{\lefthalfcap^{n-1}} Z\vert_{\del^{\neg 1}} \ar[l] \ar[r] & Z_{\neg 1} \Bigr) & *!<1em,1ex>{.} } \]
  Observing that~$Y$ and~$Z$ agree everywhere except at~$\langle m-1\rangle$, the two solid arrows on the right are weak equivalences and using Dror-Farjoun's theorem \citep[Theorem 9.1]{Wojciech1996a}, it suffices to show
  \[ \hfib_*\Bigl(\hocolim_{\lefthalfcap^{n-1}} Y\vert_{\del_1} \to \hocolim_{\lefthalfcap^{n-1}}Z\vert_{\del_1}\Bigr) \gg \Sigma^{n-m}\hfib_*\Bigl(Y_{\langle m-1\rangle} \to Z_{\langle m-1\rangle}\Bigr). \]
  For this, we simply repeat the above, identify $\lefthalfcap^{n-1} \simeq \int^{\lefthalfcap^2} (\lefthalfcap^{n-2} \from \lefthalfcap^{n-2} \to \{\ast\})$ and use Thomason's theorem again etc. We do this $m-1$ times and noting that $\bigl(Y\vert_{\del_1}\bigr)\vert_{\del_1} = Y\vert_{\del_{1,2}} = Y\vert_{\del_{\langle 2\rangle}}$ (and similarly for~$\neg 1$, $Z$ and higher order restrictions), we end up needing to show that
  \[ \hfib_*\Bigl(\hocolim_{\lefthalfcap^{n-m+1}} Y\vert_{\del_{\langle m-1\rangle}} \to \hocolim_{\lefthalfcap^{n-m+1}}Z\vert_{\del_{\langle m-1\rangle}}\Bigr) \gg \Sigma^{n-m}\hfib_*\Bigl(Y_{\langle m-1\rangle} \to Z_{\langle m-1\rangle}\Bigr). \]
  But now $\langle m-1\rangle$ is the initial vertex of~$\del_{\langle m-1\rangle}$ and the claimed cellular inequality is exactly what we get from the Thomason magic \ref{prop:thomason magic}.
\end{proof}

\section{Suspended Comparison Map}\label{sec:suspended comparison map}
Starting with a strong homotopy pushout $A\colon \medsquare^n \to \sSets$, consider the comparison map $q\colon A_\emptyset \to P_\emptyset$, where $P\colon \medsquare^n \to \sSets$ agrees with~$A$ everywhere except at~$\emptyset$, where it is the homotopy pullback of~$A\vert_{\righthalfcup^n}$ (possibly replacing~$A$ fibrantly first). From Thomason magic~\ref{prop:thomason magic}, we know that
\[ \Fib(\Sigma^{n-1}q) \gg \Fib\left(A_{\langle n\rangle} \xto{s} \hocolim_{\lefthalfcap^n} P\right) \]
and so it suffices to understand this fibre set. Restricting the obvious natural transformations $A \rar P \rar \Const_{A_{\langle n\rangle}}$ to~$\lefthalfcap^n$ and taking homotopy colimits, we get that
\[ s\colon A_{\langle n\rangle} \to \hocolim_{\lefthalfcap^n} P \qquad\text{has a retraction}\qquad r\colon \hocolim_{\lefthalfcap^n} P \to A_{\langle n\rangle}. \]
But by \ref{eqn:fibre sets and retracts} $\Fib(s) \gg \Omega\Fib(r)$. Now, let $S\colon \medsquare^n \to \sSets$ be the strong homotopy pullback associated to~$A$ (i.e.~$S \defas \Rbb\Kan(A\vert_{\langle n\rangle^{\triangleright}})$, where $\langle n\rangle^{\triangleright} \subset \medsquare^n$ is the subposet of all $\hat k \to \langle n\rangle$. Composing the two obvious transformations $P \rar S$ and $S\rar\Const_{A_{\langle n\rangle}}$, restricting to~$\lefthalfcap^n$ and taking homotopy colimits, we get a factorisation
\[ \hocolim_{\lefthalfcap^n} P \xto{p} \hocolim_{\lefthalfcap^n} S \xto{p'} A_{\langle n\rangle} \qquad\text{of}\qquad r\colon \hocolim_{\lefthalfcap^n} P  \to A_{\langle n\rangle}. \]
The acyclic inequality \ref{eqn:fibre sets of composable maps} associated to this composable pair is
\begin{equation}\label{eq:suspended comparison map} \Fib(p'\circ p) > \Fib(p) \cup \Fib(p'). \end{equation}
Finally, Puppe's theorem \ref{thm:puppe} together with Chachólski's theorem \ref{thm:chacholski} yield
\[ \Fib(p') \simeq \bigast_{k=1}^n \Fib\bigl(A_{\hat k} \to A_{\langle n\rangle}\bigr) \gg \bigast_{k=1}^n \Ffrak(A_\emptyset \to A_k). \]
\begin{prop}\label{prop:suspended comparison map}
  Let $A\colon \medsquare^n \to \sSets$ be a strong homotopy pushout with corresponding homotopy pullback~$P$, strong homotopy pullback~$S$ and comparison maps $q\colon A_\emptyset \to P_\emptyset$, $p\colon \hocolim_{\lefthalfcap^n} P \to \hocolim_{\lefthalfcap^n} S$. Then we have a cellular inequality
  \[ \Fib(\Sigma^{n-1}q) > \Omega\left(\Fib(p)\cup\bigast_{k=1}^n \Fib(A_\emptyset \to A_k)\right). \]
  \vspace{-\baselineskip}\eop
\end{prop}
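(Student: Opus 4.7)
The plan is essentially to chain together the cellular and acyclic inequalities accumulated in the discussion preceding the statement. First, I apply Thomason magic~\ref{prop:thomason magic}(b) to the natural transformation $A\vert_{\lefthalfcap^n} \rar P\vert_{\lefthalfcap^n}$, whose component at~$\emptyset$ is~$q$ and whose other components are identities; using that $A$ is a strong homotopy pushout, so $\hocolim_{\lefthalfcap^n} A\vert_{\lefthalfcap^n} \simeq A_{\langle n\rangle}$, this yields $\Fib(\Sigma^{n-1}q) \gg \Fib(s)$ for the induced map $s\colon A_{\langle n\rangle} \to \hocolim_{\lefthalfcap^n} P$.

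Second, the obvious transformation $P \rar \Const_{A_{\langle n\rangle}}$ induces on colimits a retraction $r$ of~$s$, so the retract inequality~(\ref{eqn:fibre sets and retracts}) gives $\Fib(s) \gg \Omega\Fib(r)$. Factoring $r = p' \circ p$ through the strong homotopy pullback~$S$ and invoking the composable-pair inequality~(\ref{eqn:fibre sets of composable maps}) then produces $\Fib(r) > \Fib(p) \cup \Fib(p')$; applying~$\Omega$ (which preserves the acyclic relation) yields $\Omega\Fib(r) > \Omega(\Fib(p) \cup \Fib(p'))$. Combining this with the two cellular inequalities above gives $\Fib(\Sigma^{n-1}q) > \Omega(\Fib(p) \cup \Fib(p'))$.

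What remains is to replace $\Fib(p')$ by $\bigast_{k=1}^n \Fib(A_\emptyset \to A_k)$. Puppe's theorem~\ref{thm:puppe} applied to the strong homotopy pullback cube~$S$ identifies $\Fib(p') \simeq \bigast_{k=1}^n \Fib(A_{\hat k} \to A_{\langle n\rangle})$. For each fixed~$k$, I iterate Chach\'olski's theorem~\ref{thm:chacholski} along the chain of $k$-direction edges $A_T \to A_{T\cup\{k\}}$ for $T \subseteq \hat k$: two consecutive such edges fit into a $2$-face of~$A$, which is a homotopy pushout square (since $A$ is a strong homotopy pushout), so Chach\'olski delivers $\Fib(A_{T'} \to A_{T'\cup\{k\}}) \gg \Fib(A_T \to A_{T\cup\{k\}})$ whenever $T \subseteq T'$. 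Chaining from $T = \emptyset$ up to $T = \hat k$ yields $\Fib(A_{\hat k} \to A_{\langle n\rangle}) \gg \Fib(A_\emptyset \to A_k)$, and joining across~$k$ gives $\Fib(p') \gg \bigast_{k=1}^n \Fib(A_\emptyset \to A_k)$. Substituting this into the previous inequality finishes the argument.

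I do not expect a serious obstacle here: each step is an immediate appeal to a result already at our disposal, and the whole proof is mostly bookkeeping. The only mildly delicate point is the iterated-Chach\'olski replacement, which relies on the observation that every $2$-face of a strong homotopy pushout cube is itself a homotopy pushout; once that is noted, all the inequalities slot together.
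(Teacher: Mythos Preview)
Your proposal is correct and follows essentially the same route as the paper: the discussion in Section~\ref{sec:suspended comparison map} assembles exactly the chain Thomason magic $\Rightarrow$ retract inequality $\Rightarrow$ composable-pair inequality $\Rightarrow$ Puppe/Chach\'olski that you outline, and the proposition is then recorded with no further argument. Your explicit iteration of Chach\'olski's theorem along the $k$-direction edges is a faithful unpacking of the one-line invocation in the paper.
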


%\section{Square Case}
%Applying the above proposition \ref{prop:suspended comparison map} to the square case $n = 2$, we can obtain a slight improvement over the original result from \citep{Wojciech1997a}.
%\begin{thm}\label{thm:suspended square case}
%  If~$A\colon \medsquare^2 \to \sSets$ is a homotopy pushout square with comparison map $q\colon A_\emptyset \to \holim(A_1 \to A_{1,2} \from A_2)$, then
%  \[ \Fib(\Sigma q) > \Omega\bigl(\Fib(A_\emptyset \to A_1) * \Fib(A_\emptyset\to A_2)\bigl). \]
%\end{thm}
%\begin{proof}
%  In the square case here, the ordinary homotopy pullback of $A_1 \to A_{1,2} \from A_2$ is already strong and so, \ref{prop:suspended comparison map} exactly says that
%  \[ \Fib(\Sigma q) > \Omega\bigl(\Fib(A_\emptyset \to A_1) * \Fib(A_\emptyset\to A_2)\bigr). \]
%\end{proof}
%
%\begin{rem}
%  We can actually do better and even get cellular inequalities in the above theorem, rather than just acyclic ones. Going through the last section again, the point where we went from a cellular inequality to an acyclic one was~\ref{eq:suspended comparison map}. In our square case, however, $P = S$ and so, the map $p$ in~\ref{eq:suspended comparison map} is a weak equivalence, implying that even $\Fib(p'\circ p) \simeq \Fib(p')$.
%\end{rem}

\section{The Main Theorem}
To better understand the map~$p$ from proposition \ref{prop:suspended comparison map}, we do not form the strong homotopy pullback~$S$ directly. Instead, we construct a sequence of homotopy pullback cubes
\begin{equation}\label{eqn:building strong homotopy pullbacks} P \simeq P^{(0)} \rar P^{(1)} \rar P^{(2)} \rar P^{(3)} \simeq S, \end{equation}
where each transformation induces a map between homotopy colimits over~$\lefthalfcap^3$ that we can understand. These three cubes correspond to the three pairs of opposite faces in~$\medsquare^3$. In this way, we subsequently replace all possible 2\hyph faces by homotopy pullbacks and therefore eventually arrive at a strong homotopy pullback.

\begin{defn}\label{defn:collected fibres}
  Given a cubical diagram~$X\colon \medsquare^n \to \sSets$ such that all~$X_M$ with $M \neq \emptyset$ are connected and $k \in \langle n\rangle$, we define
  \[ \Ffrak^X_k \defas \set{\hfib\Bigl(X_M \to X_{M\cup\{k\}}\Bigr)}{M \subseteq \langle n\rangle\setminus\{k\}} \]
  (where we omit the top index ``$X$'' if it is clear from the context). We call this the {\it collective $k^{\rm th}$ (homotopy) fibre\/}\index{Fibre!collective}\index{Collective!fibre}\index{Homotopy!fibre!collective} of~$X$.
\end{defn}

\begin{ex}
  If~$X$ is a strong pushout, from Chachólski's  theorem~\ref{thm:chacholski}, we know that $\Ffrak_k \gg F_k$ (and clearly $F_k \gg \Ffrak_k$). So, from a cellular viewpoint, $\Ffrak_k$ collapses to a single fibre.
\end{ex}

\begin{lem}
  Given a fibre sequence $F \to E \to B$ where $F > A$ and $E > \Sigma A$ for some simplicial set~$A$, then $B > \Sigma A$.
\end{lem}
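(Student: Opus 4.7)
The plan is to apply the closure of $\bar\Ccal(A)$ under extensions by fibrations to a Puppe-shift of the given fibre sequence, after using the Bousfield loop-suspension formula for nullifications to convert the $\Sigma A$-hypothesis on $E$ (and the $\Sigma A$-conclusion on $B$) into $A$-statements on their loop spaces.

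Concretely, I would begin by extending $F \to E \to B$ one step to the left via Puppe, yielding
\[ \Omega E \to \Omega B \to F \to E \to B, \]
and singling out the fibre sequence $\Omega E \to \Omega B \to F$, whose fibre is $\Omega E$ and whose base is~$F$. Next I would invoke Bousfield's formula $\Omega P_{\Sigma A}(X) \simeq P_A(\Omega X)$ (valid for connected~$X$), which gives the equivalence $X > \Sigma A \iff \Omega X > A$ whenever $X$ is connected (in the non-connected case the paper's convention $\Omega({-}) \defas S^{-1}$ makes both inequalities vacuous, so the equivalence degenerates consistently). Under the paper's implicit connectedness of $E$ and $B$, this translates the hypothesis $E > \Sigma A$ into $\Omega E \in \bar\Ccal(A)$ and reduces the desired conclusion $B > \Sigma A$ to $\Omega B \in \bar\Ccal(A)$.

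The remainder is then automatic: in the fibre sequence $\Omega E \to \Omega B \to F$, the fibre $\Omega E$ lies in $\bar\Ccal(A)$ by the Bousfield translation above and the base~$F$ lies in $\bar\Ccal(A)$ by the original hypothesis $F > A$, so closure of $\bar\Ccal(A)$ under extensions by fibrations puts the total space $\Omega B$ in $\bar\Ccal(A)$ as well. Translating back via Bousfield once more yields $\Omega B > A$, i.e.~$B > \Sigma A$.

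The only subtle point I anticipate is tracking connectedness through the Bousfield translation, which is bookkeeping given the paper's explicit convention on loop spaces of non-connected spaces; nothing deeper is needed, since the structural work is entirely done by combining Puppe's extension of the fibre sequence with the one closure property in the definition of $\bar\Ccal(-)$ that the rest of the paper never otherwise invokes.
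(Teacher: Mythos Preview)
Your argument is correct (under the implicit assumption that $B$ is connected, which the paper's proof also needs and which is built into the phrase ``fibre sequence'' here) and takes a genuinely different route from the paper's. The paper passes to the \emph{cofibre}: forming $E \to B \to B\sslash E$, it invokes \citep[Proposition~10.5]{Wojciech1996b} to get $B\sslash E \gg \Sigma F > \Sigma A$ and Chach\'olski's theorem~\ref{thm:chacholski} (applied to the pushout square defining $B\sslash E$) to get $\Fib(B \to B\sslash E) \gg E > \Sigma A$, then appeals to closure of $\bar\Ccal(\Sigma A)$ under extensions for the map $B \to B\sslash E$. You instead stay entirely on the fibre side, shifting one step left via Puppe and applying the same closure property to $\Omega E \to \Omega B \to F$ inside $\bar\Ccal(A)$, with Bousfield's formula $\Omega P_{\Sigma A} \simeq P_A\Omega$ translating back and forth. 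The paper's route fits the cofibre--fibre interplay used throughout and cites only results already on the table; yours is conceptually cleaner (no cofibres) but imports an external, if standard, ingredient. One small caveat: your aside about the non-connected case being ``vacuous'' is correct for the Bousfield equivalence itself, but would not rescue the argument if $B$ were disconnected, since the Puppe sequence produces the based loop space $\Omega_\ast B$ rather than the paper's conventional $\Omega B = S^{-1}$; this is harmless given the connectedness you already assume.
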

\begin{proof}
  We form the cofibre sequence $E \to B \to B\sslash E$. Now $B\sslash E \gg \Sigma F$ by \citep[Proposition 10.5]{Wojciech1996b} and $\Fib(B \to B\sslash E) \gg E$ by \ref{thm:chacholski}, from which the claim follows.
\end{proof}

\begin{prop}\label{prop:cubical case 1}
  Let~$A\colon \medsquare^3 \to \sSets$ be a strong homotopy pushout of connected spaces and $F_k \defas \hfib(A_\emptyset \to A_k)$. If $P\colon \medsquare^n \to \sSets$ is the corresponding homotopy pullback, then $\Ffrak^P_k > \Sigma\Omega F_k$ for all $k \in \langle 3\rangle$ as long as $F_1$,~$F_2$ and~$F_3$ are connected.
\end{prop}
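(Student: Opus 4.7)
My plan is to split the proof according to whether $M = \emptyset$ or not.

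For $M \neq \emptyset$, since $P$ agrees with $A$ away from $\emptyset$, the fibre $\hfib(P_M \to P_{M \cup \{k\}}) = \hfib(A_M \to A_{M \cup \{k\}})$ lies in the collective fibre $\Ffrak^A_k$. Iterated application of Chach\'olski's theorem~\ref{thm:chacholski} to the pushout two-faces of $A$ bridging this fibre with $F_k$---namely $\{\emptyset, k, i, ki\}$ when $|M| = 1$, and additionally $\{i, ki, ij, kij\}$ when $|M| = 2$---yields $\hfib(A_M \to A_{M \cup \{k\}}) \gg F_k$. Combined with the standard fact that any connected space $X$ satisfies $X > \Sigma \Omega X$, we obtain $\hfib(P_M \to P_{M \cup \{k\}}) > \Sigma \Omega F_k$ as required. (Note that the hypothesis that all $F_k$ are connected propagates via van Kampen across the pushout faces of $A$ to ensure all $G \in \Ffrak^A_k$ are connected, so the $\Ffrak^P_k$ in the sense of definition~\ref{defn:collected fibres} is well defined.)

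For the remaining case $M = \emptyset$, I would factor as $A_\emptyset \xto{q} P_\emptyset \to A_k$, whose composition is the standard cube edge with fibre $F_k$. The induced fibre sequence $\hfib(q) \to F_k \to \hfib(P_\emptyset \to A_k)$, combined with an application of the lemma preceding the proposition (taken with $A = \Omega F_k$, using $F_k > \Sigma \Omega F_k$ from connectivity), reduces the $M = \emptyset$ case to the cellular inequality $\hfib(q) > \Omega F_k$ for each $k \in \langle 3 \rangle$.

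Establishing $\hfib(q) > \Omega F_k$ is the main obstacle. My approach would be first to identify $\hfib(q) \simeq \holim_{\righthalfcup^3} \Phi$ with $\Phi_N \defas \hfib(A_\emptyset \to A_N)$, using that $\holim$ commutes with homotopy fibres. Projecting this holim onto the $\Phi_{\{k\}} = F_k$ component gives a fibration whose fibre, via the two-out-of-three fibre sequence $\Phi_M \to \Phi_{M \cup \{k\}} \to \hfib(A_M \to A_{M \cup \{k\}})$ applied pointwise, turns out to be $\Omega L$ with $L = \hfib(P_\emptyset \to A_k)$. The plan is then to exploit this together with theorem~\ref{thm:square} applied to a pushout two-face of $A$ that contains both $\emptyset$ and $k$ (such as $\{\emptyset, k, i, ki\}$), whose comparison map to its pullback has fibre bounded by $\Omega F_k \ast \Omega F_i$. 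Threading this through the composable pair $A_\emptyset \to P_\emptyset \to \holim(A_k \to A_{ki} \from A_i)$ and applying the lemma iteratively---while carefully handling the interplay between suspension, loop space, and join on cellular inequalities---should finally yield $\hfib(q) > \Omega F_k$; this bookkeeping, rather than any single big idea, is where the real work lies.
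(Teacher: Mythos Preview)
Your overall structure matches the paper exactly: split according to $M$, dispose of $M\neq\emptyset$ via Chach\'olski's theorem~\ref{thm:chacholski} and $F_k > \Sigma\Omega F_k$, and for $M=\emptyset$ use the fibre sequence $\hfib(q)\to F_k\to\hfib(P_\emptyset\to A_k)$ together with the preceding lemma to reduce to $\hfib(q)>\Omega F_k$. That reduction is correct.

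The gap is in your argument for $\hfib(q)>\Omega F_k$. Your first observation, that the fibre of $\hfib(q)\to F_k$ is $\Omega L$ with $L=\hfib(P_\emptyset\to A_k)$, is true but circular: $L$ is precisely the object you are trying to bound, so this fibre sequence gives you nothing new. Your second idea, the composable pair $A_\emptyset\to P_\emptyset\to\holim(A_k\to A_{ki}\from A_i)$, is the right one, but the rotated inequality $\hfib(q)>\{\hfib(\text{composite}),\,\Omega\Fib(f)\}$ requires you to bound $\Fib(f)=\Fib\bigl(P_\emptyset\to\holim(A_k\to A_{ki}\from A_i)\bigr)$, and you give no mechanism for this; Theorem~\ref{thm:square} only controls the composite $A_\emptyset\to\holim(\cdots)$, not $f$ itself. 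This is not bookkeeping---it is the substantive step. The paper's move here is to exhibit $P_\emptyset$ as the homotopy pullback of $A_j\to\holim(A_{kj}\to A_{kij}\from A_{ij})\from\holim(A_k\to A_{ki}\from A_i)$, where $j$ is the third index. One then checks (using that the fibres involved are killed by the connected $F_j$) that the right vertical map is surjective on components, so that $\Fib(f)\simeq\Fib(g)$ with $g\colon A_j\to\holim(A_{kj}\to A_{kij}\from A_{ij})$, and now Theorem~\ref{thm:square} applied to the face $\del_j A$ gives $\Fib(g)>\Omega F_k\ast\Omega F_i>\Sigma\Omega F_k$. Without this pullback-square identification your argument does not close.
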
%Note: Not true without the connectivity assumption!
\begin{proof}
  Without loss of generality, let's assume $k = 1$. The collective fibres~$\Ffrak^A_1$ and~$\Ffrak^P_1$ are almost the same, with the former being killed by~$F_1$. The only fibre in~$\Ffrak^P_1$ that is not in~$\Ffrak^A_1$ is $\hfib(P_\emptyset \to A_1)$. Picking any base-point on~$A_\emptyset$ makes everything (and in particular~$P_\emptyset$) pointed. We now consider the fibre sequence
  \[ \hfib(A_\emptyset \to P_\emptyset)\ \longrightarrow\ \underbrace{\hfib(A_\emptyset \to A_1)}_{F_1}\ \longrightarrow\ \hfib(P_\emptyset \to A_1) \]
  associated to $A_\emptyset \to P_\emptyset \to A_1$.
  %In particular, if the claim were true for non-connected F_2 and F_3, we would still get that \hfib(A_\emptyset\to P_\emptyset) > \Omega F_1 but for Goodwillie's theorem, we would need \Omega^2!!
  Because $F_1 \gg \Sigma\Omega F_1$ (see \citep[Corollary 10.6]{Wojciech1996b}) and using the lemma above, it suffices to show that $\hfib(A_\emptyset \to P_\emptyset) > \Omega F_1$. For this, we recall that~$P_\emptyset$ fits into a homotopy pullback square (of solid arrows)
  \[ \xymatrix@=1.5pc{ A_\emptyset \ar@{..>}[r] & P_\emptyset \ar[r]^-{f} \ar[d] & \holim(A_{1} \to A_{1,2} \from A_{2}) \ar[d]^{p} \\ & A_3 \ar[r]_-{g} & \holim(A_{1,3} \to A_{1,2,3} \from A_{2,3}). } \]
  The homotopy fibre of~$p$ above any base-point of $\holim(A_{1,3}\to A_{1,2,3}\from A_{2,3})$ is just the homotopy pullback
  \[ G \defas \holim\bigl(\hfib(A_1\to A_{1,3})\to\hfib(A_{1,2}\to A_{1,2,3})\from\hfib(A_2\to A_{2,3})\bigr). \]
  Since~$A$ is a strong homotopy pushout, all these fibres are killed by~$F_3$, which is connected, and hence connected themselves. The fibre sequence associated to the homotopy pullback~$G$ is
  \[ \Omega\hFib(A_{1,2}\to A_{1,2,3}) \to G \to \hFib(A_1\to A_{1,3})\times\hFib(A_2\to A_{2,3}) \]
  and since both the base and the fibre are killed by~$\Omega F_3$, it follows that also $G > \Omega F_3 > S_0$, so that~$\pi_0(G) \neq \emptyset$. Since the base-point was arbitrary, it follows that~$p$ hits all components of the base and hence $\Fib(g) \simeq \Fib(f)$. From acyclic homotopy excision for squares \ref{thm:square}, we know that
  \[ \Fib(g) > \Omega\hfib(A_3 \to A_{1,3})\ast\Omega\hfib(A_3 \to A_{2,3}) > \Omega F_1 \ast \Omega F_2 > \Sigma\Omega F_1, \]
  where we used that~$F_2$ is connected. By the same argument, we also have that
  \[ \Fib\bigl(A_\emptyset \to \holim(A_1 \to A_{1,2} \from A_2)\bigr) > \Omega F_1 \ast \Omega F_2 > \Sigma\Omega F_1. \]
  Now, the acyclic inequality \ref{eqn:fibre sets of composable maps} associated to the top composite in the last diagram gives
  \[ \hfib(A_{\emptyset} \to P_\emptyset) > \Fib\bigl(A_\emptyset \to \holim(A_1 \to A_{1,2} \from A_2)\bigr) \cup \Fib(f) \]
  and finally get $\hfib(A_\emptyset \to P_\emptyset) > \Omega\Sigma\Omega F_1 > \Omega F_1$.
\end{proof}

As already mentioned, we start with a homotopy pullback $P\colon \medsquare^3 \to \sSets$ and use the pairs of opposing faces $(\del_1,\del^{\neg 1})$,~$(\del_2,\del^{\neg 2})$ and~$(\del_3,\del^{\neg 3})$ of~$\medsquare^3$ to build the corresponding strong pullback in several steps, yielding a sequence \ref{eqn:building strong homotopy pullbacks} of homotopy pullback cubes. Pictorially, we are going to do the following (using $(\del_1,\del^{\neg 1})$ here):
\[  \xymatrix@C=2em@R=4ex{ & P_\emptyset \ar[rr] \ar[ld] \ar[dd]|\hole && P_1 \ar[dd] \ar[dl] \\
  P_2 \ar[rr] \ar[dd] && P_{1,2} \ar[dd] \\
  & P_3 \ar[rr]|(.475)\hole \ar[dl] && P_{1,3} \ar[dl] \\
  P_{2,3} \ar[rr] && P_{1,2,3}
  \ar@{}"1,4";"4,3"|(.2)*+{\displaystyle P^{(1)}_1}="Q1"
  \ar@{}"1,2";"4,1"|(.2)*+{\displaystyle P^{(1)}_\emptyset}="Q0"
  \ar"Q1";"2,3" \ar"Q1";"3,4" \ar@{..>}"1,4";"Q1"
  \ar"Q0";"2,1" \ar"Q0";"3,2"|(.3)\hole \ar@{..>}"1,2";"Q0" \ar@{..>}"Q0";"Q1"|(.79)\hole
}
\]
By definition, $P^{(1)}_\emptyset$ and~$P^{(1)}_1$ are the homotopy pullbacks of~$\del^{\neg 1}P$ and~$\del_1P$. Moreover, as described in section \ref{sec:setup}, by Thomason's theorem and using the hypothesis that~$P$ is a homotopy pullback cube, the square formed by $P_\emptyset$,~$P_1$,~$P^{(1)}_\emptyset$ and~$P^{(1)}_1$ is also a homotopy pullback. When passing to~$P^{(2)}$, we replace $P^{(1)}_\emptyset$ and~$P^{(1)}_2$ by the homotopy pullbacks of $\del^{\neg 2}P^{(1)}$ and~$\del_2P^{(1)}$, respectively. As the initial vertex changes, we need to make sure that the left face $\del^{\neg 1}$ stays a homotopy pullback. Since everything is symmetric, we might just as well check that in the cubical diagram above, if the back and front faces are homotopy pullbacks, so is the square containing $P^{(1)}_\emptyset$,~$P^{(1)}_1$,~$P_3$ and~$P_{1,3}$. This follows from Thomason's theorem, by which
\[ \xymatrix@=1.5pc{ P^{(1)}_\emptyset \ar[r] \ar[d] & \holim\bigl(P_3 \to P_{1,3} \from P_1^{(1)}\bigr) \ar[d] \\ P_2 \ar[r] & \holim\bigl(P_{2,3} \to P_{1,2,3}  \from P_{1,2}\bigr) } \]
is a homotopy pullback and the bottom map is a weak equivalence by assumption. Similarly when passing from~$P^{(2)}$ to~$P^{(3)}$, where then, the cube has become the strong homotopy pullback obtained from all the $P_{\hat k} \to P_{\langle3\rangle}$ because all 2\hyph faces of~$P^{(3)}$ are homotopy pullbacks.

\begin{prop}\label{prop:cubical case 2}
  Let~$P\colon \medsquare^3 \to \sSets$ be a homotopy pullback cube and~$P'$ the new homotopy pullback cube obtained from~$P$ by replacing~$P_\emptyset$ and~$P_k$ with the homotopy pullbacks of~$\del^{\neg k}$ and~$\del_k$ for some $k \in \langle 3\rangle$. If all~$P_M$ and~$P'_M$ with $M \neq \emptyset$ are connected, then $\Ffrak_l^{P'} \subseteq \Ffrak_l^{P}$ (up to weak equivalences) for all $l \in \langle 3\rangle$, whence $\Ffrak_l^{P'} \gg \Ffrak_l^{P}$.%
\end{prop}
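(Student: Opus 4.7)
The plan is to compare $\Ffrak_l^{P'}$ with $\Ffrak_l^P$ vertex by vertex, exploiting that $P'_M = P_M$ for every $M\notin\{\emptyset,\{k\}\}$. Consequently an entry $\hfib(P'_M \to P'_{M\cup\{l\}})$ of $\Ffrak_l^{P'}$ can differ from its counterpart in $\Ffrak_l^P$ only when $\emptyset$ or $\{k\}$ occurs in $\{M,\, M\cup\{l\}\}$; every other entry is literally identical. I would split into the two cases $l=k$ and $l\neq k$, writing $\{i,j\} = \langle 3\rangle\setminus\{k\}$ in the latter.

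In the case $l=k$, the only potentially new entry of $\Ffrak_k^{P'}$ is $\hfib(P'_\emptyset \to P'_k)$. To handle it, I would apply exactly the Thomason argument spelled out in the paragraph introducing $P^{(1)}$: using the representation $\righthalfcup^3 \cong \int^{\righthalfcup^2}(\dots)$ along the pair of opposing faces $(\del_k,\del^{\neg k})$ together with the hypothesis that $P$ is a homotopy pullback cube, the square
\[ \vxymatrix{ P_\emptyset \ar[r] \ar[d] & P_k \ar[d] \\ P'_\emptyset \ar[r] & P'_k } \]
is itself a homotopy pullback. Taking homotopy fibres of the horizontal maps then gives $\hfib(P'_\emptyset \to P'_k) \simeq \hfib(P_\emptyset \to P_k)$, which lies in $\Ffrak_k^P$.

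In the case $l=i\neq k$, the only potentially new entries are $\hfib(P'_\emptyset \to P_i)$ and $\hfib(P'_k \to P_{i,k})$ (using $P'_i=P_i$ and $P'_{i,k}=P_{i,k}$). Since by construction $P'_\emptyset$ is the homotopy pullback of $P_i \to P_{i,j} \from P_j$, the standard description of the fibre of a projection out of a homotopy pullback yields $\hfib(P'_\emptyset \to P_i) \simeq \hfib(P_j \to P_{i,j}) \in \Ffrak_i^P$; similarly $P'_k = \holim(P_{i,k} \to P_{i,j,k} \from P_{j,k})$ gives $\hfib(P'_k \to P_{i,k}) \simeq \hfib(P_{j,k} \to P_{i,j,k}) \in \Ffrak_i^P$. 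The connectedness of all $P_M$, $P'_M$ with $M\neq\emptyset$ makes the choice of base-points immaterial throughout.

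Together these two cases prove that $\Ffrak_l^{P'}$ is contained in $\Ffrak_l^P$ up to weak equivalence, and the asserted cellular inequality follows at once from the monotonicity of the closure operator $\Ccal(-)$. The only step with any real content is the homotopy pullback square in the case $l=k$; but this is a single iteration of Thomason's theorem on one pair of opposing faces of $\medsquare^3$, which is precisely the construction already described just before the proposition, so it should present no real obstacle.
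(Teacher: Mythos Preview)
Your proposal is correct and follows essentially the same approach as the paper: the same case split $l=k$ versus $l\neq k$, the same use of the homotopy pullback square $P_\emptyset,P_k,P'_\emptyset,P'_k$ (established via Thomason's theorem just before the proposition) for $l=k$, and the same identification of the fibres of the projections out of the defining pullbacks $P'_\emptyset$ and $P'_k$ for $l\neq k$. The paper merely fixes $k=1$ and $l=2$ for notational convenience, but the argument is identical.
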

\begin{proof}
  Without loss of generality, let's assume $k=1$ (i.e.~$P' = P^{(1)}$). The only vertices, where~$P$ and~$P^{(1)}$ differ are those at~$\emptyset$ and~$1$ and so, the only fibres that change are those of the morphism associated to $\emptyset \to 1$ (in~$\Ffrak_1$) as well as those associated to $\emptyset \to \{l\}$ and $1 \to \{1,l\}$ (in~$\Ffrak_l$) for $1 \neq l$. The first one is easy because
  \[ \hfib\left(P^{(1)}_\emptyset \to P^{(1)}_1\right) \simeq \hfib\bigl(P_\emptyset \to P_1\bigr) \]
  (because the corresponding square is a homotopy pullback) and therefore $\Ffrak^{P'}_1 \simeq \Ffrak^{P}_1$. As for the other ones, we just use that~$P^{(1)}_\emptyset$ and~$P^{(1)}_1$ are obtained as the pullbacks of the left and right faces in the cube above. So, for example, taking $l = 2$,
  \[ \hfib\left(P^{(1)}_\emptyset \to P_2\right) \simeq \hfib(P_3 \to P_{2,3}), \quad \hfib\left(P^{(1)}_1\to P_{1,2}\right) \simeq \hfib(P_{1,3} \to P_{1,2,3}). \]
  So $\Ffrak^{P'}_2$ comprises just these two fibres and is hence contained in~$\Ffrak^P_2$.
\end{proof}

\begin{prop}\label{prop:cubical case 3}
  Let $P$,~$P'\colon \medsquare^3 \to \sSets$ and $k \in \langle 3\rangle$ as in the last proposition. If all~$P_M$ and~$P'_M$ with $M \neq \emptyset$ are connected, then the canonical transformation $P \rar P'$ induces a map
  \[ \hocolim_{\lefthalfcap^3} P \to \hocolim_{\lefthalfcap^3} P', \]
  whose homotopy fibre~$F$ satisfies $F \gg \Sigma\bigl(\Ffrak^P_k\ast\hfib(P_k \to P'_k)\bigr)$.
\end{prop}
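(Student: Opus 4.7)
Without loss of generality take $k = 1$ and write $P' = P^{(1)}$. The transformation $\tau\colon P \rar P'$ is the identity on every vertex of $\lefthalfcap^3$ except $\emptyset$ and $\{1\}$; moreover, by Thomason's theorem (applied dually to the pullback structure of $P$, in the same manner used in Section \ref{sec:setup} to verify that the intermediate cubes $P^{(i)}$ remain homotopy pullback cubes), the comparison square
\[
\xymatrix@=1.5pc{ P_\emptyset \ar[r] \ar[d] & P_1 \ar[d] \\ P'_\emptyset \ar[r] & P'_1 }
\]
is a homotopy pullback. In particular, $\hfib(P_\emptyset \to P'_\emptyset) \simeq \hfib(P_1 \to P'_1) = \hfib(P_k \to P'_k)$.

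The plan is to reduce to two dimensions via Thomason's theorem and then apply the generalised Serre theorem. Choose $j \in \{2,3\}$ and decompose
\[
\hocolim_{\lefthalfcap^3}P \simeq \hocolim\bigl(\hocolim_{\lefthalfcap^2}\del_j P \from \hocolim_{\lefthalfcap^2}\del^{\neg j}P \to P_{\neg j}\bigr),
\]
with an analogous expression for $P'$. Since neither $\del_j$ nor the singleton $\{\neg j\}$ contains a vertex at which $\tau$ is non-trivial, the induced $\lefthalfcap^2$-transformation of spans is the identity at the outer vertices and non-trivial only in the middle. Thomason magic (Proposition \ref{prop:thomason magic} with $n = 2$) therefore yields
\[
F \gg \Sigma\,\Fib\bigl(\hocolim_{\lefthalfcap^2}\del^{\neg j}P \to \hocolim_{\lefthalfcap^2}\del^{\neg j}P'\bigr).
\]

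The remaining transformation of $\lefthalfcap^2$-diagrams is identity outside the edge $\emptyset \to \{1\}$, where it coincides with the homotopy pullback square displayed above (which, being two-dimensional, is automatically a strong homotopy pullback). Serre's theorem \ref{thm:serre} with $n = m = 2$ then gives
\[
\Fib\bigl(\hocolim_{\lefthalfcap^2}\del^{\neg j}P \to \hocolim_{\lefthalfcap^2}\del^{\neg j}P'\bigr) \gg \hfib(P_\emptyset \to P_1) \ast \hfib(P_\emptyset \to P'_\emptyset),
\]
and composing these two cellular inequalities (together with the equivalence established in the first paragraph) produces one instance of the required acyclic inequality:
\[
F \gg \Sigma\bigl(\hfib(P_\emptyset \to P_1) \ast \hfib(P_k \to P'_k)\bigr).
\]

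The remaining three cellular inequalities, corresponding to $M \in \{\{2\},\{3\},\{2,3\}\}$, follow from the same reduction after first using Thomason's theorem to reindex the cube so that the direction-$1$ arrow $P_M \to P_{M \cup \{1\}}$ plays the role formerly occupied by $P_\emptyset \to P_1$. The step that requires the most care — and the main technical obstacle of the proof — is producing, for each such $M$, the homotopy pullback sub-square of $P$ that takes the place of the displayed comparison square. This follows from an iterated application of Thomason's theorem to the fact that $P$ is a homotopy pullback cube (in the same spirit as the verifications accompanying the construction $P \to P^{(1)} \to P^{(2)} \to P^{(3)}$). Once this is granted, Thomason magic and Serre's theorem apply verbatim, and the four cellular inequalities so obtained assemble into $F \gg \Sigma\bigl(\Ffrak^P_k \ast \hfib(P_k \to P'_k)\bigr)$.
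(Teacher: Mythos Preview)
Your steps 1--5 are correct and land exactly where the paper does, namely at
\[
  F \gg \Sigma\bigl(\hfib(P_\emptyset \to P_1)\ast\hfib(P_1 \to P'_1)\bigr),
\]
but you get there by a detour. The paper applies the generalised Serre theorem~\ref{thm:serre} \emph{directly} with $n=3$ and $m=2$: the transformation $P\rar P'$ restricted to $\medsquare^{1}=\{\emptyset,\{1\}\}$ is the homotopy pullback square you display, and it is the identity elsewhere on~$\lefthalfcap^3$, so \ref{thm:serre} immediately gives $F \gg \Sigma^{3-2}(F_1\ast F_2)$ with $F_1=\hfib(P_\emptyset\to P_1)$ and $F_2=\hfib(P_\emptyset\to P'_\emptyset)\simeq\hfib(P_1\to P'_1)$. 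Your Thomason decomposition followed by Thomason magic and then Serre with $n=m=2$ is essentially re-running the inductive proof of~\ref{thm:serre} by hand; nothing is wrong, but it is redundant.

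Your step~6, however, rests on a misreading of the statement. The right-hand side $\Sigma\bigl(\Ffrak^P_k\ast\hfib(P_k\to P'_k)\bigr)$ is a \emph{set} of spaces, and $F\gg M$ for a set $M$ means that $F$ lies in the cellular class $\Ccal(M)$ generated by the whole set. Since $\hfib(P_\emptyset\to P_1)\in\Ffrak^P_1$, the single inequality you obtained already gives $F\in\Ccal$ of one element of that set, hence a fortiori $F\in\Ccal$ of the set. No further cases are needed --- which is exactly why the paper's proof stops after one application of~\ref{thm:serre}. Moreover, the sketched argument for the ``remaining three cases'' cannot work as written: the transformation $P\rar P'$ is the identity away from $\emptyset$ and~$\{1\}$, so no reindexing of the cube can make an edge like $P_{\{2\}}\to P_{\{1,2\}}$ play the role of the initial edge in a Serre-type setup while keeping the transformation non-trivial only there.
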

\begin{rem}
  Implicit in our proposition is that~$\hocolim_{\lefthalfcap^3}P'$ is connected as well, which follows readily from the Mayer-Vietoris long exact sequence for~$\hocolim_{\lefthalfcap^3} P'$.
\end{rem}
\begin{proof}
  Without loss of generality, let's treat the case $k = 1$ (i.e.~$P' = P^{(1)}$). Note that $P \rar P'$ consists of identities everywhere except at~$\emptyset$ and~$1$, where we have the homotopy pullback square
  \[ \xymatrix@=1.5pc{ P_\emptyset \ar[r] \ar[d] & P_1 \ar[d] \\ P'_\emptyset \ar[r] & P'_1 & *!<2em,.5ex>{.} } \]
  By the generalised Serre theorem \ref{thm:serre}, we get that the homotopy fibre~$F$ of the induced map between homotopy colimits satisfies
  \[ F \gg \Sigma\Bigl(\underbrace{\hfib(P_\emptyset \to P_1)}_{\in\Ffrak^P_1}\ast\underbrace{\hfib\bigl(P_\emptyset \to P'_\emptyset\bigr)}_{\simeq\hfib(P_1 \to P'_1)}\Bigr) \]
\end{proof}

\begin{cor}
  Let $P\colon \medsquare^3 \to \sSets$ be a homotopy pullback with corresponding strong homotopy pullback $S\colon \medsquare^3 \to \sSets$ and comparison map $p\colon \hocolim_{\lefthalfcap^3} P \to \hocolim_{\lefthalfcap^3} S$. If all~$P_M$ and~$S_M$ with~$M \neq \emptyset$ are connected, then
  \[ \hfib(p) \gg \Sigma\set{\Ffrak^P_k\ast\hfib(P_k \to S_k)}{k\in\langle3\rangle}. \]
\end{cor}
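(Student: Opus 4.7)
The plan is to decompose the comparison map~$p$ along the sequence
\[ P = P^{(0)} \rar P^{(1)} \rar P^{(2)} \rar P^{(3)} \simeq S \]
from~\ref{eqn:building strong homotopy pullbacks}, in which each transformation $P^{(i-1)} \rar P^{(i)}$ replaces the vertices at~$\emptyset$ and~$i$ by the homotopy pullbacks of~$\del^{\neg i}P^{(i-1)}$ and~$\del_iP^{(i-1)}$ respectively. Applying $\hocolim_{\lefthalfcap^3}$ gives a factorisation $p = p_3\circ p_2\circ p_1$ with $p_i\colon \hocolim_{\lefthalfcap^3}P^{(i-1)} \to \hocolim_{\lefthalfcap^3}P^{(i)}$.

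To each~$p_i$ I would apply proposition~\ref{prop:cubical case 3} to obtain
\[ \hfib(p_i) \gg \Sigma\bigl(\Ffrak^{P^{(i-1)}}_i \ast \hfib(P^{(i-1)}_i \to P^{(i)}_i)\bigr). \]
Since the replacements for $j \neq i$ do not touch position~$i$, one has $P^{(i-1)}_i = P_i$ and $P^{(i)}_i = S_i$, so the second factor simplifies to $\hfib(P_i \to S_i)$. An iterated application of proposition~\ref{prop:cubical case 2} shows that $\Ffrak^{P^{(i-1)}}_i$ is, up to weak equivalence, contained in~$\Ffrak^P_i$, so this bound already contributes the $k=i$ term of the desired right-hand side.

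Finally, the three individual cellular bounds must be assembled into a single one on $\hfib(p) = \hfib(p_3\circ p_2 \circ p_1)$. I would do this using the iterated fibre sequences
\[ \hfib(p_1) \to \hfib(p_2\circ p_1) \to \hfib(p_2),\qquad \hfib(p_2\circ p_1) \to \hfib(p) \to \hfib(p_3), \]
together with the connectivity of every intermediate $\hocolim_{\lefthalfcap^3}P^{(i)}$ (compatible with the remark accompanying proposition~\ref{prop:cubical case 3}, via repeated Mayer--Vietoris), so that each base space along the way is connected.

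I expect this last composition step to be the main obstacle: \ref{eqn:fibre sets of composable maps} only yields an acyclic inequality for composable homotopy fibres, whereas the conclusion asks for the cellular $\gg$. To bridge the gap I would exploit that each~$p_i$ arises from a single generalised Serre replacement in the sense of theorem~\ref{thm:serre}, so that the relevant fibres are already presented as iterated suspended joins and the connectivity hypotheses allow one to absorb the extensions by fibrations into homotopy-colimit decompositions. An alternative fallback would be to avoid iteration entirely by regarding the full transformation $P \rar S$ as a diagram over $\lefthalfcap^3\times\lefthalfcap^3$ and invoking a single global instance of Thomason magic (proposition~\ref{prop:thomason magic}) to extract $\hfib(p)$ in cellular terms.
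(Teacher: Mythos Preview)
Your approach is exactly the paper's: factor $p = p_3\circ p_2\circ p_1$ along~\ref{eqn:building strong homotopy pullbacks}, apply proposition~\ref{prop:cubical case 3} to each $p_i$ (with the identifications $P^{(i-1)}_i = P_i$ and $P^{(i)}_i = S_i$ you note), and then invoke proposition~\ref{prop:cubical case 2} to replace each $\Ffrak^{P^{(i-1)}}_i$ by $\Ffrak^P_i$.

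For the assembly step the paper simply uses~\ref{eqn:fibre sets of composable maps}, writing $\hfib(p) > \{\hfib(p_k)\}_{k} \gg \{\ldots\}$; so the tension between the acyclic $>$ coming from the composition and the cellular $\gg$ in the statement that worries you is present in the paper's own argument and is not resolved there. No workaround is offered, and none is needed downstream: the only use of this corollary, in the proof of the main theorem, immediately weakens to $>$ in the very next line. You can therefore drop both proposed fixes; in particular the global Thomason-magic alternative would not produce the individual join terms $\Ffrak^P_k\ast\hfib(P_k\to S_k)$ anyway.
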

\begin{proof}
  We construct~$S$ in several steps, as outlined in \ref{eqn:building strong homotopy pullbacks} and note that $P^{(1)}_1 \simeq S_1$, $P^{(2)}_2 \simeq S_2$ and $P^{(3)}_3 \simeq S_3$, while $P^{(1)}_2 \simeq P_2$ and $P^{(2)}_3 \simeq P^{(1)}_3 \simeq P_3$. Now, writing
  \[ p_k\colon \hocolim_{\lefthalfcap^3} P^{(k-1)} \to \hocolim_{\lefthalfcap^3} P^{(k)} \qquad\text{(where $P^{(0)} \defas P$)} \]
  for the canonical maps, we have $p = p_3\circ p_2\circ p_1$. By the acyclic inequality \ref{eqn:fibre sets of composable maps} for composable pairs and \ref{prop:cubical case 3}, we have

  \[ \hfib(p) > \set{\hfib(p_k)}{k\in\langle3\rangle} \gg \set{\Sigma\left(\Ffrak^{P^{(k-1)}}_k\ast\hfib(P_k \to S_k)\right)}{k\in\langle 3\rangle} \]
  Using~\ref{prop:cubical case 2}, we can now replace~$\Ffrak^{P^{(1)}}_2$ and~$\Ffrak^{P^{(2)}}_3$ by~$\Ffrak^P_2$ and~$\Ffrak^P_3$, respectively.
\end{proof}

Finally, by combining this corollary with \ref{prop:cubical case 1} and the square case \ref{thm:square}, we obtain the following weak version of the 3\hyph dimensional homotopy excision theorem.
\begin{thm}
  Let $A\colon \medsquare^3 \to \sSets$ be a strong homotopy pushout of connected spaces, with homotopy fibres $F_k \defas \hfib(A_\emptyset \to A_k)$ and comparison map $q\colon A_\emptyset \to \holim_{\righthalfcup^3} A$. As long as the homotopy fibres $F_1$,~$F_2$ and~$F_3$ are again connected,
  \[ \hFib(\Sigma^2q) > \Sigma(\Omega F_1\ast\Omega F_2\ast\Omega F_3). \]
\end{thm}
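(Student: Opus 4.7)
The plan is to assemble the tools built up in the preceding sections. First apply Proposition~\ref{prop:suspended comparison map} with $n=3$, which yields
\[ \Fib(\Sigma^2 q) > \Omega\bigl(\Fib(p)\cup F_1\ast F_2\ast F_3\bigr), \]
so that it suffices to show every space in $\Omega(\Fib(p)\cup F_1\ast F_2\ast F_3)$ is killed by $\Sigma T$, where $T\defas\Omega F_1\ast\Omega F_2\ast\Omega F_3$.

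For the join term: the connectivity of each $F_k$ combined with \citep[Corollary 10.6]{Wojciech1996b} gives $F_k \gg \Sigma\Omega F_k$. Since joins preserve cellular inequalities and $(\Sigma X)\ast Y\simeq\Sigma(X\ast Y)$, this yields $F_1\ast F_2\ast F_3 > \Sigma^3 T$. For the $\Fib(p)$ term, the Corollary immediately preceding the theorem gives
\[ \hfib(p) \gg \Sigma\set{\Ffrak^P_k\ast\hfib(A_k\to S_k)}{k\in\langle 3\rangle}. \]
Now Proposition~\ref{prop:cubical case 1} supplies $\Ffrak^P_k > \Sigma\Omega F_k$, while the two-dimensional face $\del_k A$ is a homotopy pushout (because $A$ is strong) whose comparison map into its homotopy pullback is exactly $A_k\to S_k$; Theorem~\ref{thm:square} combined with Chachólski's theorem~\ref{thm:chacholski} (to identify the fibres of that pushout square in terms of $F_i$ and $F_j$) therefore gives $\hfib(A_k\to S_k) > \Omega F_i\ast\Omega F_j$ for $\{i,j,k\}=\{1,2,3\}$. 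Joining and applying $(\Sigma X)\ast Y\simeq\Sigma(X\ast Y)$ once more, each $\Ffrak^P_k\ast\hfib(A_k\to S_k) > \Sigma T$, and hence $\Fib(p) > \Sigma^2 T$.

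The remaining step is to cancel the outer $\Omega$ against one power of $\Sigma$: one needs $\Omega X > \Sigma T$ whenever $X$ is killed by $\Sigma^n T$ with $n\geq 2$. This is the cellular-class counterpart of the $\Omega$\hyph$\Sigma$ cancellation, following from $\Omega\Sigma Z \in \bar\Ccal(Z)$ for connected $Z$ (a consequence of the James-style filtration of $\Omega\Sigma Z$ by the smash powers $Z^{\wedge n}$, together with the closure of $\bar\Ccal$ under extensions by fibrations and contractible colimits), plus careful tracking of connectivity through the joins. Applying this to $F_1\ast F_2\ast F_3$ and to each element of $\Fib(p)$ yields the desired $\Fib(\Sigma^2 q) > \Sigma T$. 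The main technical obstacle lies precisely in this last $\Omega$\hyph$\Sigma$ cancellation within the acyclic-inequality framework; everything else is bookkeeping with the established machinery.
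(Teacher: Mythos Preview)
Your outline is exactly the paper's argument: apply Proposition~\ref{prop:suspended comparison map}, bound $\Fib(p)$ via the Corollary together with Proposition~\ref{prop:cubical case 1} and Theorem~\ref{thm:square} plus Chach\'olski~\ref{thm:chacholski}, handle the join term with $F_k\gg\Sigma\Omega F_k$, and then cancel the outer $\Omega$ against one $\Sigma$. Two points deserve tightening.

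First, you skip the verification that every $P_M$ and $S_M$ (and the intermediate $P^{(k)}_M$) with $M\neq\emptyset$ is connected. This is a standing hypothesis for both the Corollary and Proposition~\ref{prop:cubical case 1}, and the paper checks it explicitly: for instance $\Fib(P_1\to P^{(1)}_1) > \Omega F_2\ast\Omega F_3 > S^1$ by Theorem~\ref{thm:square} and~\ref{thm:chacholski}, so the map is surjective on components and $P^{(1)}_1$ is connected since $P_1=A_1$ is; similarly for the later stages.

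Second, on the $\Omega$--$\Sigma$ cancellation you correctly isolate as the crux: your James-filtration argument yields $\Omega\Sigma Z \in \bar\Ccal(Z)$, hence $\Omega\Sigma^2 T > \Sigma T$. But to pass from $X > \Sigma^2 T$ to $\Omega X > \Sigma T$ you would still need $\Omega X > \Omega\Sigma^2 T$, and $\Omega$ does not commute with the contractible homotopy colimits that generate $\bar\Ccal$, so this does not follow by induction over the closure operations. The clean statement actually being used is ``$X > \Sigma A$ implies $\Omega X > A$'', which follows from the commutation $P_A\Omega \simeq \Omega P_{\Sigma A}$ of nullification functors \citep{Farjoun1996}. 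The paper simply records the chain $\Omega\hfib(p) > \Omega\Sigma^2 T > \Sigma T$ without comment, so you were right to flag this as the one non-formal step---but the James splitting alone does not close it.
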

\begin{proof}
  Let $P\colon \medsquare^3 \to \sSets$ be the cubical homotopy pullback associated to~$A$ and $S\colon \medsquare^3 \to \sSets$ the strong homotopy pullback of all the $A_{\hat{k}} \to A_{\langle 3\rangle}$. We first note that all~$P_M$ and~$S_M$ with $M \neq \emptyset$ are connected. Indeed, $P_1 \to P^{(1)}_1$ has no empty homotopy fibres because
  \[ \Fib\left(P_1 \to P^{(1)}_1\right) \stackrel{\text{\tiny \ref{thm:square}}}{>} \Omega\hfib(P_1 \to P_{1,2}) * \Omega\hfib(P_1 \to P_{1,3}) \stackrel{\text{\tiny \ref{thm:chacholski}}}{>} \Omega F_2 * \Omega F_3 > S^1. \]
  Hence all components are hit and since $P_1 = A_1$ is connected, so is~$P^{(1)}_1$. Similarly when passing to~$P^{(2)}$ and~$P^{(3)} = S$. Writing $p\colon \hocolim_{\lefthalfcap^3} P \to \hocolim_{\lefthalfcap^3} S$, it suffices to show that $\hfib(p) > \Sigma^2(\Omega F_1\ast\Omega F_2\ast\Omega F_3)$ because then, by~\ref{prop:suspended comparison map},
  \[ \hfib(\Sigma^2q) \gg \Omega\hfib(p) > \Omega\Sigma^2(\Omega F_1\ast\Omega F_2\ast\Omega F_3) > \Sigma(\Omega F_1\ast\Omega F_2\ast\Omega F_3). \]
  From the above corollary (and noting that $P_k = A_k$), we already know that 
  \[ \hfib(p) \gg \Sigma\set{\Ffrak^P_k\ast\hfib(A_k \to S_k)}{k\in\langle3\rangle} \]
  and by~\ref{prop:cubical case 1}, we then have
  \[ \hfib(p) > \Sigma\set{\Sigma\Omega F_k\ast\hfib(A_k \to S_k)}{k\in\langle3\rangle}. \]
  But by definition $S_1 = \holim(A_{1,2} \to A_{1,2,3} \from A_{1,3})$, so that, by the acyclic homotopy excision theorem for squares \ref{thm:square} and Chachólski's theorem \ref{thm:chacholski},
  \[ \hfib(A_1 \to S_1) > \Omega\hfib(A_1\to A_{1,2}) \ast \Omega\hfib(A_1\to A_{1,3}) > \Omega F_2 \ast \Omega F_3. \]
  Similarly for the other two homotopy fibres, so that, all in all,
  \[ \hfib(p) > \Sigma^2(\Omega F_1\ast\Omega F_2\ast\Omega F_3). \]
\end{proof}

Combining this theorem with the relative Hurewicz isomorphism theorem \citep[Theorem 7.5.4]{Spanier1994}, we recover a cubical version of the classical homotopy excision theorem \citep{GoodwillieII} for simply connected spaces.
\begin{cor}
  If $A\colon \medsquare^3 \to \sSets$ is a strong homotopy pushout cube of simply connected spaces whose homotopy fibres $\hfib(A_\emptyset \to A_k)$ are $i_k$\hyph connected for $i_k \geq 1$, then the total fibre $\hfib(q\colon A_\emptyset \to \holim_{\righthalfcup^3}A)$ is $(i_1+i_2+i_3)$\hyph connected.
\end{cor}
\begin{proof}
  This follows from the relative Hurewicz isomorphism theorem together with the cellular inequality \citep[Proposition 10.5]{Wojciech1996b}
  \[ \Cof(\Sigma^2q) > \Sigma\hFib(\Sigma^2q). \]
\end{proof}

\bibliographystyle{mybst.bst}
\bibliography{homotopy.bib}

\end{document}